\documentclass[10pt]{amsart}
\linespread{1.1}
\usepackage{graphics}
\usepackage[english]{babel}
\usepackage{amssymb,amsmath,amsfonts}
\usepackage{mathabx}
\usepackage{datetime}
\usepackage{color}
\usepackage{ulem}
\usepackage{lipsum}
\usepackage{amsfonts}
\usepackage{enumerate}
\usepackage{cite}
\usepackage{tikz}
\usetikzlibrary{matrix}
\synctex=1
\RequirePackage{amscd}
\RequirePackage{epic}
\RequirePackage[all]{xy}
\RequirePackage{url}
\RequirePackage[shortlabels]{enumitem}
\usepackage{empheq}
\usepackage{epsfig}
 \usepackage{perpage}
 \MakePerPage{linenumbers}
\usepackage[english]{babel}

\usepackage[utf8]{inputenc}
\usepackage{cite}
\usepackage{empheq}
\usepackage{nomencl}
\usepackage{epsfig}
\usepackage{graphicx}
\usepackage{marginnote}


\newcommand{\comment}[1]{}
   
    \newcommand{\set}[1]{{\left\{#1\right\}}}


\newcommand{\T}{\mathbb{T}}
\newcommand{\Z}{\mathbb{Z}}
\newcommand{\R}{\mathbb{R}}
\newcommand{\C}{\mathbb{C}}

\newcommand{\eps}{\varepsilon}


    


\newcommand{\co}[1]{\textit{#1}}



\usepackage{amsthm}


\RequirePackage{url}
\newtheorem{prop}{Proposition}[section]
    \newtheorem{thm}{Theorem}
    \newtheorem*{thm*}{Theorem}
    \newtheorem*{cor*}{Corollary}
    \newtheorem*{prop*}{Proposition}

    \newtheorem{cor}[prop]{Corollary}
    \newtheorem{lemma}[prop]{Lemma}
\newtheorem{rmk}[prop]{Remark}
\theoremstyle{definition}
\newtheorem{defn}[prop]{Definition}
\newtheorem{notas}[prop]{Notations}
    \newtheorem{ex}[prop]{Example}












\newcommand{\K}{{\mathbb K}}
\newcommand{\N}{{\mathbb N}}
\newcommand{\Q}{{\mathbb Q}}


\newcommand{\cA}{{\mathcal A}}

\newcommand{\cE}{{\mathcal E}}
\newcommand{\cF}{{\mathcal F}}

\newcommand{\cI}{{\mathcal I}}
\newcommand{\cJ}{{\mathcal J}}
\newcommand{\cK}{{\mathcal K}}
\newcommand{\cL}{{\mathcal L}}

\newcommand{\cO}{{\mathcal O}}
\newcommand{\cP}{{\mathcal P}}

\newcommand{\cR}{{\mathcal R}}

\newcommand{\cT}{{\mathcal T}}
\newcommand{\cU}{{\mathcal U}}
\newcommand{\cV}{{\mathcal V}}
\newcommand{\cW}{{\mathcal W}}







\usepackage{bm}






\newcommand{\0}{{(0)}}





\newcommand{\e}{{\varepsilon}}

\newcommand{\nnorm}[1]{{\left\vert\kern-0.25ex\left\vert\kern-0.25ex\left\vert #1 
    \right\vert\kern-0.25ex\right\vert\kern-0.25ex\right\vert}}






\usepackage{framed,enumitem} 

\definecolor{aquamarine}{rgb}{0,0.5,0.5}





\begin{document}

\author{Marie-Claude Arnaud}
\address{Universit\'e de Paris and Sorbonne Universit\'e, CNRS, IMJ-PRG, F-75006 Paris, France. 
\& Member of the Institut universitaire de France.}
\email{marie-claude.arnaud@u-paris.fr}

\author{Jessica Elisa Massetti}
\address{Dipartimento di Matematica e Fisica, Universit\`a degli Studi Roma Tre, Rome, Italy.}
\email{ jessicaelisa.massetti@uniroma3.it}

\author{Alfonso Sorrentino}
\address{Dipartimento di Matematica, Universit\`a degli Studi di Roma ``Tor Vergata'', Rome, Italy.}
\email{sorrentino@mat.uniroma2.it}

 \title{On the fragility of periodic tori for families of symplectic twist maps}

\begin{abstract}
In this article we study the fragility of Lagrangian periodic tori for symplectic twist maps of the $2d$-dimensional annulus and prove a rigidity result for
completely integrable ones.\\
More specifically, we consider $1$-parameter families of symplectic twist maps $(f_\eps)_{\eps\in \R}$, obtained by perturbing the generating function of an analytic map $f$ by a family of potentials $\{\eps G\}_{\eps\in \R}$.
{Firstly, for an analytic $G$  and for $(m,n)\in \Z^d\times \N^*$ with $m$ and $n$ coprime,
we investigate the topological structure of the set of $\eps\in \R$ for which $f_\eps$  admits a Lagrangian periodic torus of rotation vector $(m,n)$. In particular we prove that, under a suitable non-degeneracy condition on $f$, this set consists of at most finitely many points.
Then, we exploit this to deduce a rigidity result for integrable symplectic twist maps, in the case of deformations produced by a $C^2$ potential.\\
Our analysis, which holds in any dimension, is based on a thorough investigation of the geometric and dynamical properties of Lagrangian periodic tori, which we believe is of its own interest.}
\end{abstract}

\maketitle

\section{Introduction and main results}

In the study of Hamiltonian systems, an important role is played by so-called {\it integrable systems}. These systems -- whose dynamics is quite simple to describe due to the presence of a large number of conserved quantities/symmetries --  arise quite naturally 
 {in} many physical and geometric problems. 
Investigating which of the properties of these systems  break or are preserved in the passage from the   {integrable} regime to  {non-integrable} one is a very natural and enthralling question,  which 
 {is part of what Henri Poincar\'e recognised} as  ``{\it the general problem of dynamics}'' {(see  \cite[Sec. 13]{Poincare:methodes1})}. \\
  Among the many results in this direction, 
a place of honour goes undoubtedly to Kolmogorov's breakthrough in 1954 \cite{Kolmogorov} that, together with the subsequent works by Arnol'd \cite{ArnoldKAM, ArnoldKAM2} and Moser \cite{MoserKAM},  paved the ground to what  nowadays is known as {\it KAM theory} 
(see {\cite{KAMstory} for a detailed historical account and, for example, 
\cite{BS20, EFK:2015} and \cite{CC:2009, Massetti:APDE, Massetti:ETDS}, for more recent advances and generalizations in, respectively, Hamitonian and non necessarily conservative dynamics}). \\

As a common feature, integrable systems reveal a certain \co{rigidity}: integrability appears to be a very fragile property that is not expected to persist under generic, yet small, perturbations.  
Understanding the essence of this feature is a very compelling task, which 
arises in various contexts, providing the ground for some of the foremost questions and conjectures in dynamics. 
{Let us mention that there are different possible notions of integrability {for a finite dimensional dynamical system}: one that allows singularities of the invariant foliation, that we will {refer to as {\it integrability}}, and the other that requires a regular foliation everywhere, that we call {\it complete integrability}.  \\

{
Among the most iconic questions related to integrability or complete integrability in finite dimension,  we recall the following:
\begin{itemize}[leftmargin=*]
  \item[(i)] The {\it Birkhoff conjecture} in billiard dynamics, which claims that the  only $2$-dimensional billiard tables for which the corresponding dynamics is  integrable, are elliptic ones.
  This question in the case of complete integrability has been solved in \cite{Bialy1993} (see also \cite{Wojt} by means of an integral-geometric approach).\\
  Recently there have been several breakthroughs related to the integrable case: in the perturbative setting (namely, for domains that are small perturbations of ellipses) several versions of the conjecture have been proven in \cite{ADK, HKS2018, KS2018, Koval}; in non-perturbative case, we recall the work \cite{Innami} (see also \cite{ArnBialy}) and the recent proof of the conjecture for centrally-symmetric domains in \cite{BialyMironov2020}.\\
A version of this conjecture related to the   existence of an integral of motion polynomial in the velocity ({\it algebraic Birkhoff conjecture}) was solved in \cite{Glutsyuk} (see also previous results \cite{Bolotin, BialyMironov2017}).\\
  We refer to \cite{KS2021} for a more detailed survey of these results and more references.\\
 \item[(ii)] The  problem of characterizing {\it integrable Riemannian geodesic flows} on the $d$-dimensional torus $\T^d$. For completely integrable metrics, this question is related to the so-called {\it Hopf conjecture} ({\it i.e.}, the metric must be flat) and it was solved in \cite{Hopf, BI1994}. There exist on $\T^d$ metrics that are integrable, but not completely integrable, namely metrics of the form:
 $$ ds^2 = (f_1(x_1) + f_2(x_2) + \ldots + f_d(x_d)) (dx_1^2 + dx_2^2+\ldots + dx_d^2),$$
 the so-called {Liouville metrics}. A folklore conjecture states that these metrics are the only  integrable metrics on $\T^d$.
A partial answer to this conjecture in dimension $d=2$ is provided in \cite{BFM} under the assumption that the system admits an integral of motion which is quadratic in the momenta. Observe that while the case of quadratic integral of motion reduces to a system of linear partial differential equations, the case of higher degree integrals of motions is very challenging and it turns out to be equivalent to delicate questions on non-linear partial differential equations of hydrodynamic type (see \cite{BialyMironov2011a, BialyMironov2011b, BialyMironov}).\\
A deformational version of this conjecture on $\T^2$ has been recently investigated in \cite{Henheik}.
Finally, see for instance \cite{Kolokoltsov, Bialy2010}, where this question on surfaces different from the torus is addressed. 
\end{itemize}}

{
\begin{rmk}
It is worthwhile to mention that also the study of {\it infinite dimensional integrable systems} is a very active field of research, with many aspects related to the structure of their phase space, their dynamical properties, as well as the notion of integrability itself (clearly, asking for ``infinitely many'' integrals of motion only is not a well-posed condition), being very challenging and still missing a complete understanding. 
These systems naturally arise to model a wide variety of wave phenomena ({\it e.g,}, the famous {\it KdV equation} or  the {\it Sine-Gordon equation}) or in other areas of the applied sciences, for example,  the  {\it Toda lattice},  a simple integrable model for one-dimensional crystal in solid state physics.\\ 
A detailed  review of infinite dimentional integrable systems would go well-beyond the scopes of this article. We refer interested readers to the nice expositions in \cite{Kappeler-Poschel:KAM, Kuksinbook, ZakShab74} for integrability of PDEs,  and to the ones in \cite{Toda, Kappeler-Henrici:aa} for the Toda lattice. 
\end{rmk}}

In this article we would like to shed more light on the nature of this {\it fragility} and {\it rigidity}, in the setting of symplectic twist maps of the $2d$-dimensional annulus $\T^d\times\R^d$, where $\T^d := \R^d/\Z^d, $  and $d\ge 1$.\\
More specifically, we will focus on two related aspects:
\begin{itemize}
\item[a)] The persistence and the properties of invariant tori that are foliated by periodic points  (see Definition \ref{def periodic tori}). These objects are at the core of the fragility of integrable systems, since -- as already pointed out by Poincar\'e -- they seem to be extremely easy to break and their persistence does not appear to be generic, in counterposition to the robustness of the non-periodic invariant tori considered by KAM theory. \\
More precisely, we consider a one-parameter {perturbation} of a twist map (not necessarily integrable) and investigate the topological structure of the set of parameters for which the perturbed map admits a periodic torus of a given rotation vector.
See section \ref{secintroteo1} and Theorem \ref{teo 1} for more details and precise statements.\\

\item[b)]  The rigidity of integrable twist maps, namely, to which extent it is possible to deform in a non-trivial way an integrable twist map, preserving some (or all) of its features.\\
More precisely, we consider a one-parameter perturbation of an integrable twist map and {point out dynamical conditions implying that the perturbation must be trivial.}
See section \ref{secintroteo2} and Theorem \ref{teo 2} for more details.\\
\end{itemize}

It is worth mentioning that our investigation relies on a thorough analysis of the geometric and dynamical properties of periodic tori, which -- we believe -- are interesting {\it per se} (see Sections \ref{Aactionpertori}, \ref{perlagtori}, and Appendix \ref{ALipschitzetGreen}).\\

Before stating our main results (sections \ref{secintroteo1} and \ref{secintroteo2}), in the next section let us first clarify the setting that we consider and introduce the main objects that are involved.\\

\smallskip

\subsection{Setting and Definitions} \label{setting}
    In this paper we will consider \co{symplectic twist maps} of the {$2d$-dimensional annulus $\T^d\times \R^d$, where $d\ge 1$  and $\T^d := \R^d/\Z^d$} according to the following definition.

    \begin{defn}[Symplectic twist maps]\label{defsympltwistmap}
     A { {\it symplectic twist map} } of the $2d$-dimensional annulus is a  $C^1$ diffeomorphism $f:\T^d\times \R^d\righttoleftarrow$ that admits a lift $F: {\R^{d}\times \R^d} \righttoleftarrow$, $F(q, p)=:(Q(q, p), P(q,p))$  satisfying
\begin{enumerate}
    \item $F(q +m, p)=F(q, p)+(m, 0)$ $\forall m\in\Z^d$;
    \item ({\sl Twist condition}) the map $(q, p)\mapsto(q, Q(q, p))$ is a diffeomorphism of ${\R^{d}\times \R^d}$; 
    \item ({\sl Exactness}) there exists  a {\it generating function} of the map $F$, namely a function $S:{\R^{d}\times \R^d}\rightarrow \R$ such that
    \begin{itemize}
        \item[$\bullet$] $S(q+m, Q+m)=S(q, Q)$, $\forall m\in\Z^d$,
        \item[$\bullet$] $PdQ-pdq=dS(q, Q)$.        
    \end{itemize}
\end{enumerate}
\end{defn}
  
  Note that in some of the literature, maps satisfying Definition \ref{defsympltwistmap} are often called  \textit{exact symplectic twist maps}.
  
    \begin{ex}\label{exCI} Assume that $\ell_0:\R^d\rightarrow \R$ is at least $C^2$ and that $\nabla\ell_0:\R^d\to\R^d$ is  a $C^1$ diffeomorphism.  A  \co{completely integrable} symplectic twist map is defined via
    \begin{equation}\label{ECI}
f_0 (q, p):=(q+\nabla \ell_0(p), p).
\end{equation} 
Such a map is a symplectic twist map. Moreover, $\T^d\times \R^d$ is foliated by invariant Lagrangian tori $\T^d\times \{ r_0\}$ and the restriction of $f_0$ to each of these tori is a rotation, which is periodic when  $\nabla\ell_0(r_0)\in\Q^d$.\\
The generating function of \eqref{ECI} is given by $S_0(q,Q) := h_0(Q-q)$,  where $\nabla h_0=(\nabla \ell_0)^{-1}$.
    \end{ex}
    
 In our results we will study a special family of symplectic twist maps, that we now describe.
    \begin{notas}\label{notadeformation}
Let $G:\T^d\rightarrow \R$ be at least $C^2$ and $f$ a symplectic twist map. A {\it symplectic deformation} of  $f$ by a potential $G$ is given by the family of maps
\begin{equation}\label{dpcm18ott}
f_\eps (q, p) :=   f (q, p+\eps \nabla G(q))\quad \eps\in \R.
\end{equation}
Their generating functions are given by
\begin{equation}\label{gen funct}
(q, Q)\mapsto S_\eps(q,Q):=S(q, Q)+ \varepsilon G(q),\end{equation}
where $S(q, Q)$ denotes the generating function of $f$.\\
We remark that in the context of Aubry-Mather theory for Lagrangian systems, this kind of perturbations by a potential are often called 
{\it perturbation in the sense of Ma\~n\'e} (see, for instance \cite{Mane}).
   \end{notas}

\begin{defn}[Properties of symplectic twist maps]\label{defpropsympltwistmap} 
    Let $f:\T^d\times \R^d\righttoleftarrow$ be a symplectic twist map  that admits a lift $F: {\R^{d}\times \R^d} \righttoleftarrow$, $F(q, p)=:(Q(q, p), P(q,p))$. We denote a generating function of $F$ by $S$.
  
     {\bf (i)}  The  symplectic twist map $f$ is said to be {\it positive} if there exists $\alpha>0$ such that  
     $${\partial_q\partial_Q}{S}(q, Q)(v, v)\leq -\alpha\| v\|^2 \quad \forall\, q, Q, v \in \R^d,$$    
      where $ \|.\|$ denotes the  Euclidean norm in $\R^d$.\\
      It is said to be {\it strongly positive} if there exists $\alpha, \beta>0$ 
      such that  
     $$ -\beta \|v\|^2 \leq {\partial_q\partial_Q}{S}(q, Q)(v, v)\leq -\alpha\| v\|^2 \quad \forall\, q, Q, v \in \R^d.$$
     
{\bf (ii)} It has  {\it bounded rate} if 
$\|\partial_q\partial_qS\|_\infty+\|\partial_Q\partial_QS\|_\infty$ is bounded.\\
\end{defn}

\begin{rmk} {\rm (i)}\label{rmk properties of maps}
These notions are independent of the chosen lift $F$ of $f$ and also of the chosen generating function of $F$.\\
{\rm (ii)} If we write the Jacobian matrix of $f$ at $(q, p)$ by $d$-dimensional blocks 
$$
{Df(q, p)=:}\begin{pmatrix} a(q, p)&b(q, b)\\
c(q, p)&d(q,p)
\end{pmatrix}
$$
then the positivity and strongly positivity of $f$ {can be written, respectively,}
$${\exists\,\alpha>0:} \qquad  b(q, p)(v,v)\geq {\alpha \|b(q, p) v\|^2} \quad \forall\, (q, p)\in\T^d\times \R^d,\,  \forall\, {v\in \R^d}, $$
and
$${\exists\,\alpha,\beta>0:} 
\quad 
 {\beta}\| b(q,p) v\|^2\geq b(q, p)(v,v)\geq {\alpha}\| b(q, p) v\|^2
\quad  \forall\, (q, p)\in\T^d\times \R^d,  \forall\,{v\in \R^d}.  
 $$
{Moreover, the bounded rate condition corresponds to the boundedness of $\| b^{-1}a\|_\infty+\| db^{-1}\|_\infty$, which can be interpreted by saying that}  the direct and inverse images of the verticals $\{ 0\}\times \R^d$ by {$Df(q,p)$} are graphs of linear maps $L^\pm_{(q, p)}:\R^d\to\R^d$ whose norms $\| L^\pm_{(q, p)}\|$ are uniformly bounded.\\
{\rm (iii)} 
{If we consider a symplectic deformation of a map $f$ by a potential, as introduced in \eqref{dpcm18ott}, we observe that
if $f_0$ is positive (respectively, strongly positive/with bounded rate), then all the diffeomorphisms of the family  $(f_\eps)_{\eps\in \R}$  are also positive (respectively, strongly positive/with bounded rate).
}
\end{rmk}

\smallskip

\begin{rmk}\label{rmkciboundedrate} Coming back to Example \ref{exCI}, $f_0$ is positive if $D^2\ell_0 (p)$ is positive definite and there exists $\alpha>0$ such that $$D^2\ell_0 (p) (v,v) {\geq}  \frac{1}{\alpha }\|v\|^2  \quad \forall\, p, v \in \R^d.$$
Similarly, $f_0$ is strongly positive if there exist $\alpha, \beta>0$ such that 
$$ \frac{1}{\alpha} \|v\|^2  {\leq}  D^2\ell_0 (p) (v,v)  {\leq}   \frac{1}{\beta} \|v\|^2  \quad \forall\, p, v \in \R^d.$$
Observe that a positive completely integrable symplectic twist map has always bounded rate.
\end{rmk}

\smallskip

{Our  study will focus on the existence and the properties of the following  dynamical objects: {\it periodic} and {\it completely periodic graphs} of a symplectic twist map}.

\begin{defn}[Periodic and completely-periodic tori]\label{def periodic tori}
Let $F:\R^d\times \R^d\righttoleftarrow$ be a lift of a symplectic twist map $f:\T^d\times \R^d\righttoleftarrow$. Let {$\gamma: \R^d \longrightarrow \R^d$} be a $\Z^d$-periodic and {continuous} function, and let 
  $\cL:={\rm graph}(\gamma)$. For $(m, n)\in \Z^d\times\N^*$ with $m$ and $n$ coprime, we say that:
  \begin{itemize}
  \item[{\bf (i)}]
   $\cL$ is a {\it $(m, n)$-periodic graph} of $F$ if
$$ F^n(q, \gamma (q))=(q+m, \gamma (q)) \qquad \forall \,q\in\R^d;$$
  \item[{\bf (ii)}]
$\cL$ is  a {\it $(m, n)$-completely periodic graph} of $F$ if it is invariant by $F$ and a $(m, n)$-periodic graph of $F$.
\end{itemize}
{We might refer to the projection of $\cL$ to $\T^d\times \R^d$ as a {\it periodic} (resp., {\it completely periodic}) {\it torus} of $f$.}
{Then a  $(m,n)$-periodic graph $\cL$ satisfies $f^n(\cL)=\cL$ and a $(m, n)$-completely periodic torus is $f$-invariant.}
\end{defn}

\begin{rmk} 
Coming back to Example \ref{exCI},  if $F_0 (q, p):=(q+\nabla \ell_0(p), p)$ is a lift of $f_0$, 
then, for every $p_0\in \R^d$ such that 
$\nabla \ell_0(p_0) = \frac{m}{n}\in \Q^d$, where $(m, n)\in \Z^d\times\N^*$ with $m$ and $n$ coprime, the graph $\cL_{p_0}:=\R^d \times \{p_0\}$ is a $(m,n)$-completely periodic graph of $F_0$.\
\end{rmk}

{In the following, the periodic invariant graphs we will look at are {\it Lipschitz} (or $C^0$) {\it  Lagrangian graphs}. More precisely:}

\begin{defn}
{Let $\gamma: \R^d \longrightarrow \R^d$ be a {Lipschitz} function.  Then
$\cL:={\rm graph}(\gamma)$ is said to be a {\it Lagrangian} graph if for every 
$C^1$ loop $\nu: \T \longrightarrow \R^d$, we have}
$$\int_\T\gamma(\nu(t))\dot\nu(t)dt=0.$$
\end{defn}

\medskip

\begin{rmk} \label{G20}
{{\bf (i)} Let ${\rm graph}(\gamma)$ be a Lipschitz Lagrangian graph. If we denote $\gamma(q):=(\gamma_1(q), \ldots, \gamma_d(q))$,
 it follows from the definition of being Lagrangian that the $1$-form $\sum_{i=1}^d \gamma_i(q)\,dq_i$ is exact in $\R^d$.
In particular,  if $\gamma$ is $\Z^d$-periodic, then $\gamma = c+du$ where $c\in\R^d$ and $u:\R^d\rightarrow\R$ is  $C^{1,1}$ and $\Z^d$-periodic.\\
{\bf (ii)} Observe that the limit of a family of uniformly Lipschitz Lagrangian graphs is also a Lipschitz Lagrangian graph.\\
{\bf (iii)}  We will prove in Proposition \ref{invarianttori} that for positive symplectic twist maps, if one considers Lipschitz Lagrangian graphs, then the notions of periodic and completely periodic graphs coincide.}
\end{rmk}

\smallskip

{To conclude this subsection, let us introduce a regularity assumption that will be assumed in our main results (Theorems \ref{teo 1} and \ref{teo 2}), namely that 
the unperturbed symplectic twist map $f_0$ to satisfy the following {\it analyticity condition}.}

\begin{defn}[{Analyticity property}] \label{Danaly} 
A  symplectic twist map $f: \T^d\times \R^d\righttoleftarrow $ satisfies the {\it analyticity property} if there exists a holomorphic map $\cF: \C^d\times \C^d \righttoleftarrow$, where $\cF(q, p)=:(Q(q, p), P(q,p))$, such that: 
\begin{enumerate}
    \item $\cF$ is a holomorphic  diffeomorphism of $ \C^d\times \C^d $;
    \item $\cF_{|\R^{d}\times \R^d}$ is a lift of $f$;
    \item ({\sl Twist condition}) the map $(q, p)\mapsto(q, Q(q, p))$ is a diffeomorphism of $\C^d\times \C^d$; 
    \item ({\sl Exactness}) there exists a { generating function} $S: \C^d\times \C^d  \rightarrow \C$ such that
    \begin{itemize}
        \item[$\bullet$]  $S(q+m, Q+m)=S(q, Q)$ $\forall m\in\Z^d$;
        \item[$\bullet$] $PdQ-pdq=dS(q, Q)$.        
    \end{itemize}
\end{enumerate}
\end{defn}

\begin{rmk}
Coming back to Example \ref{exCI},
let $\ell_0: \C^d  \rightarrow \C$ be an analytic function such that
${\ell_0}_{|\R^d}$ is real and  $\nabla \ell_0:  \C^d \rightarrow \C^d$ is a diffeomorphism. Then, $f _0(\theta, r)= (\theta+\nabla\ell_0(r), r)$ is a completely integrable symplectic twist map of $\T^d\times \R^d$  that satisfies the analyticity property as in Definition \ref{Danaly}.
\end{rmk}

\bigskip

\subsection{Main results}

{In this section we will state our two main results, which are both concerned with the existence of  $(m,n)$-periodic tori for families of  symplectic twist maps obtained as symplectic deformations by potentials (see  \eqref{dpcm18ott}) of a map $f$ which will be assumed to be strongly positive and to satisfy the analyticity property.}\\

{We can briefly outline the content of our results as follows:\\}

{The first result (Theorem \ref{teo 1}) will investigate, for  fixed  $(m,n)\in \Z^d\times \N^*$ with $m$ and $n$ coprime,  the ``{size}'' of the set of parameters $\eps\in \R$ for  for which the corresponding perturbed map in the symplectically deformed family admits a  {Lipschitz} Lagrangian $(m,n)$-periodic  graph. As far as the regularity of the  the perturbing potential is concerned, we will assume that it admits a holomorphic extension to $\C^d$.\\
}

In the second result  (Theorem \ref{teo 2}), we will be interested in the ``rigidity'' of completely integrable symplectic twist maps (see Example \ref{exCI}), proving that any symplectic deformation by a non-constant potential cannot preserve integrability or part of it (we will provide a precise description of this ``trace'' of integrability that guarantess rigidity). The regularity assumptions on the perturbing potential will be relaxed (if compared to Theorem \ref{teo 1}), not requiring any analyticity assumption.\\

\medskip

\subsubsection{\bf Main Results I: on the fragility of periodic tori} \label{secintroteo1}
Let us state our first main result concerning the possible existence and persistence of Lagrangian $(m,n)$-periodic tori -- for a given $(m, n)\in \Z^d\times\N^*$ with $m$ and $n$ coprime -- in the case of a $1$-parameter family of symplectic twist maps, obtained
as symplectic deformations by a potential, as defined in \eqref{dpcm18ott}.
We remark that our maps will satisfy suitable non-degeneracy and regularity assumptions ({\it i.e.}, analyticity, see definition \ref{Danaly}), but  no extra dynamical requirement  will be imposed.\\

\medskip

\begin{thm}\label{teo 1}
Let $f: \T^d \times \R^d \righttoleftarrow$ be symplectic twist map and let $F: \R^d \times \R^d \righttoleftarrow$ be a lift  of $f$ and let $S: \R^d\times \R^d \longrightarrow \R$ be its generating function.  Let also $G: \T^d\to\R$ be a potential function.  \\
Consider the family of symplectic twist maps $f_\eps:  \T^d \times \R^d \righttoleftarrow$, with $\eps\in \R$, obtained as symplectic deformation of $f$ by $G$, as in \eqref{dpcm18ott},
and denote by $F_\eps$ a continuous family {of lifts of $f_\eps$}.\\
{\rm I.} Assume that:
\begin{enumerate}[(i)]
\item  {$f$ is strongly positive},
\item  $f$  satisfies the analyticity property,
\item  $G$  admits a holomorphic extension to $\C^d$.
\end{enumerate}
Then,  for every $(m,n)\in \Z^d\times \N^*$, with $m$ and $n$ coprime, the set
\[
 \set{\eps\in \R: \, F_\eps \text{ has a {Lipschitz} Lagrangian $(m,n)$-periodic  graph}}
\]
is either the whole $\R$ or consists of isolated points.\\
{\rm II.} If, in addition, $f$ has also bounded rate and $G$ is non-constant, then the above set consists of at most finitely many points.\\
\end{thm}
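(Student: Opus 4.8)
\emph{Strategy.} The plan is to reduce the existence of a Lipschitz Lagrangian $(m,n)$-periodic graph to the vanishing of a holomorphic function of $\eps$, and then apply the identity principle. First, since $S_\eps(q,Q)=S(q,Q)+\eps\,G(q)$, the lifts factor as $F_\eps=F_0\circ\tau_\eps$ with $\tau_\eps(q,p)=(q,\,p+\eps\,\nabla G(q))$ the exact symplectic shear; thus $F_\eps$ is a positive twist map for every real $\eps$ and, $F_0$ being holomorphic and $\tau_\eps$ affine in $\eps$, it extends to a holomorphic twist map for $\eps$ in a complex neighbourhood of $\R$. A composition of positive twist maps being again one, $F_\eps^n$ possesses a generating function $S^{(n)}_\eps(q_0,q_n)=\min_{q_1,\dots,q_{n-1}}\sum_{j=0}^{n-1}S_\eps(q_j,q_{j+1})$ (the minimum being attained at a non-degenerate critical configuration, by strong positivity), which by assumptions (i)--(iii) is holomorphic in $(q_0,q_n,\eps)$ on a fixed complex strip around $\R^d\times\R^d\times\R$. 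Next, by the structure theory of periodic Lagrangian tori, a Lipschitz Lagrangian $(m,n)$-periodic graph of $F_\eps$ is $f_\eps$-invariant (Proposition~\ref{invarianttori}) and each of its orbits is action-minimizing (Appendix~\ref{perlagtori}); hence such a graph, if it exists, must equal ${\rm graph}(\gamma_\eps)$ with $\gamma_\eps(q):=-\partial_1 S^{(n)}_\eps(q,q+m)$, which is $\Z^d$-periodic and, crucially, \emph{real-analytic}. Requiring the orbit through $(q,\gamma_\eps(q))$ to close up, one finds that ${\rm graph}(\gamma_\eps)$ is $(m,n)$-periodic iff $\partial_1 S^{(n)}_\eps(q,q+m)+\partial_2 S^{(n)}_\eps(q,q+m)=0$ for all $q$ — i.e.\ iff $q\mapsto S^{(n)}_\eps(q,q+m)$ is constant on $\R^d$ — and that it is Lagrangian iff ${\rm curl}\,\gamma_\eps\equiv0$ (an empty condition when $d=1$). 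Therefore the set in Theorem~\ref{teo 1} equals $\{\eps\in\R:\Phi(\eps)=0\}$, where
\[
\Phi(\eps):=\Big(\ q\mapsto S^{(n)}_\eps(q,q+m)-S^{(n)}_\eps(0,m)\ ,\ \ {\rm curl}\,\gamma_\eps\ \Big)
\]
is holomorphic on a connected complex neighbourhood $U\supset\R$, with values in a complex Banach space $X$ of pairs formed by a bounded holomorphic $\Z^d$-periodic function and a bounded holomorphic $\Z^d$-periodic $2$-form on a complex strip.

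\emph{The dichotomy.} If $\Phi\equiv0$ on $U$, the set is all of $\R$. Otherwise choose $\eps_0\in U$ with $\Phi(\eps_0)\neq0$ and $\lambda\in X^{\ast}$ with $\lambda(\Phi(\eps_0))\neq0$; then $\lambda\circ\Phi\colon U\to\C$ is holomorphic and not identically $0$, so $(\lambda\circ\Phi)^{-1}(0)$ has no accumulation point in $U$, and a fortiori the set in question — being contained in it — is closed in $\R$ and consists of isolated points.

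\emph{Finiteness.} Assume in addition that $f$ has bounded rate and $G$ is non-constant. By the previous paragraph the set is discrete and closed, so it suffices to show it is bounded. For each $\eps$ in the set I would establish an a priori estimate $\|\gamma_\eps+\eps\,\nabla G\|_\infty\le C(1+|\eps|^{1/2})$ with $C=C(f,G,m,n)$: comparing the action of a minimal $(m,n)$-orbit of $f_\eps$ with that of a uniformly-spaced configuration (and using the coercivity of $S$) bounds the jumps of the orbit by $\mathcal{O}(|\eps|^{1/2})$, and then $\|\partial^2 S\|_\infty<\infty$ — which holds by strong positivity together with bounded rate — turns this into the stated bound, and likewise for $q\mapsto\partial_2 S^{(n)}_\eps(q,q+m)$. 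Feeding these into the Euler--Lagrange equation $\partial_2 S_\eps(q_0,q_1)+\partial_1 S_\eps(q_1,q_2)=0$ along the orbit yields $|\eps|\,|\nabla G(q_1)|\le C'(1+|\eps|^{1/2})$; since $q_1$ runs over all of $\T^d$ as $q$ does (the dynamics induced on the graph is a degree-one, hence surjective, torus map), this gives $\|\nabla G\|_\infty\le C'(1+|\eps|^{1/2})/|\eps|\to0$, contradicting $\nabla G\not\equiv0$ once $|\eps|$ is large. Hence the set is bounded, and therefore finite.

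\emph{Main obstacle.} The heart of the matter is the first step: proving that a merely Lipschitz Lagrangian periodic graph is forced to be real-analytic — equal to ${\rm graph}\big(-\partial_1 S^{(n)}_\eps(\cdot,\cdot+m)\big)$ — and that $S^{(n)}_\eps$ is holomorphic \emph{uniformly} for $\eps$ in a genuine complex neighbourhood of $\R$; this rests on the minimality of orbits lying on invariant Lagrangian tori and on quantitative control of the $n$-fold composition provided by strong positivity (Appendices~\ref{perlagtori} and~\ref{ALipschitzetGreen}). The a priori estimate of the last step, and the precise way bounded rate enters there, is the second, more computational, difficulty.
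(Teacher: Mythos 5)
Your overall strategy---encode the existence of the periodic Lagrangian graph as the vanishing of a function holomorphic in $\eps$ on a connected complex neighbourhood of $\R$, then invoke the identity principle---is the same as the paper's. But the step on which everything hinges, and which you yourself flag as ``the main obstacle,'' is not merely a technical difficulty: as set up, it fails. The object $S^{(n)}_\eps(q_0,q_n):=\min_{q_1,\dots,q_{n-1}}\sum_j S_\eps(q_j,q_{j+1})$ cannot serve as a globally defined holomorphic generating function of $F^n_\eps$. First, for $n\ge 2$ the composition $F^n_\eps$ is in general \emph{not} a twist map (the map $(q_0,p_0)\mapsto(q_0,q_n)$ need not be a diffeomorphism), so no single-valued generating function in the variables $(q_0,q_n)$ generates all of $F^n_\eps$; the min only selects one branch. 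Second, and decisively for your argument, a minimum of a family of holomorphic functions is not holomorphic: as $(q_0,q_n,\eps)$ varies, the minimizing interior configuration can switch branches discontinuously, so $S^{(n)}_\eps$ is in general only Lipschitz/semiconcave, and $\Phi(\eps)$ is not known to be even continuous in $\eps$, let alone holomorphic on a connected $U\supset\R$. Non-degeneracy of the minimizer (absence of conjugate points) is available only along orbits lying on the periodic Lagrangian torus---i.e.\ only for those $\eps$ already in the set you are trying to describe---not for arbitrary $(q_0,q_n,\eps)$. Without holomorphy of $\Phi$ on a \emph{connected} neighbourhood of all of $\R$, the functional/identity-principle step collapses, and with it the dichotomy.

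The paper circumvents exactly this by never composing generating functions: it complexifies the \emph{graph condition} rather than the action. It shows that $\cR^*_{(m,n)}(\C)=\{(\eps,q,p):\pi_1\circ F^n_\eps(q,p)=q+m,\ \det\partial_p(\pi_1\circ F_\eps^n)\neq 0\}$ is a $(d+1)$-dimensional complex submanifold, that the set $\cJ^*$ of parameters carrying such a non-degenerate radial graph is \emph{open} while the set $\cI^*$ of parameters carrying the periodic Lagrangian graph is \emph{closed}, and then analytically continues the holomorphic functions $\Delta=\pi_2\circ F^n_\eps-p$ and $\chi=(\det(DF^n_\eps-X\,{\mathbb I}),(DF^n_\eps-{\mathbb I})^2)$ along connected components of $\cR^*_{(m,n)}(\C)$; the no-conjugate-point property of minimizing orbits is what guarantees that the real periodic graphs sit inside $\cR^*$. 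Your reformulation of periodicity as constancy of $q\mapsto S^{(n)}_\eps(q,q+m)$ and of the Lagrangian condition as ${\rm curl}\,\gamma_\eps=0$ is the right variational picture in the real domain, but it does not by itself produce the analytic object needed. On the other hand, your finiteness argument (momentum bound $O(1+|\eps|^{1/2})$ from an action comparison plus coercivity, fed into the Euler--Lagrange equation to force $\|\nabla G\|_\infty\to 0$) is a genuinely different and plausible route to the boundedness of the parameter set; the paper instead uses the second-order inequality $\partial^2_{q^2}S+\partial^2_{Q^2}S+\eps D^2G>0$ along invariant Lagrangian graphs (Proposition \ref{Pnoinvariantgraph}), which fails for large $|\eps|$ when $G$ is non-constant and $f$ has bounded rate. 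To repair your proof you would need to replace the min-based $S^{(n)}_\eps$ by a branch-tracking construction equivalent to the paper's complex manifold $\cR^*_{(m,n)}(\C)$, together with the open/closed interplay that propagates the continuation.
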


\begin{rmk}
It is not necessary in the proof that $F$ admits a holomorphic extension to the whole $\C^d\times \C^d$ (see Definition \ref{Danaly}):
it would be enough that the analyticity property holds
on a strip $\Sigma^{2d}_\sigma$, where 
$\Sigma_\sigma:=\{z\in \C: \; |{\rm Im} \,z| <\sigma\}$ for some $\sigma>0$. Similarly, $G$ can be assumed to admit a holomorphic extension only to a strip and the dependence on $\varepsilon$ being at higher orders, {\it i.e.} $G(q,\varepsilon)$.
However, we decided not to pursue this further generality, in order to ease the notation and make the presentation clearer.\\
\end{rmk}

One can deduce from Theorem \ref{teo 1} the following rigidity result.
\begin{cor}\label{cor 1}
Assume the hypotheses and the notations of Theorem \ref{teo 1}, including $f$ being bounded-rate.
{Let $(m,n)\in \Z^d\times \N^*$, with $m$ and $n$ coprime, such that the maps $f_\e$ admit a Lipschitz Lagrangian $(m,n)$-periodic torus for infinitely many values of $\eps \in \R$}. Then, $G$ must be constant.
\end{cor}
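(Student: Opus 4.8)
The plan is to regard the existence of a Lipschitz Lagrangian $(m,n)$-periodic graph as the vanishing of a suitable analytic function of $\eps$. By Remark \ref{G20}(i), such a graph is of the form $\gamma = c + du$ with $c\in\R^d$ determined by the rotation vector and $u$ a $C^{1,1}$, $\Z^d$-periodic potential; and since $f$ is strongly positive, by Proposition \ref{invarianttori} the graph is automatically $f$-invariant, so the periodic condition $F_\eps^n(q,\gamma(q))=(q+m,\gamma(q))$ is equivalent to $u$ being a critical point of the (periodic, $n$-step) discrete action functional $\mathcal{A}_\eps$ associated with $S_\eps$ on the configuration space of $(m,n)$-periodic orbits. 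The first step is therefore to set up this variational characterization carefully, drawing on the action estimates of Appendix \ref{Aactionpertori}: the candidate minimizers $u$ form a compact (in the Lipschitz topology, using the a priori Lipschitz bounds coming from strong positivity and the Green bundle estimates of Appendix \ref{ALipschitzetGreen}) family, and I would show that for each $\eps$ admitting such a graph the minimal action $\alpha_\eps(m,n)$ (the value of the action at a minimizer) is the relevant scalar.

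The heart of the argument is to prove that $\eps\mapsto\alpha_\eps(m,n)$ is real-analytic on any open interval of $\eps$ on which a Lagrangian $(m,n)$-periodic graph exists, and more precisely that it extends holomorphically to a complex neighborhood; then to exploit analyticity. I would argue as follows. On the set $\mathcal{E}:=\{\eps: F_\eps \text{ has a Lipschitz Lagrangian } (m,n)\text{-periodic graph}\}$, pick an accumulation point $\eps_0$. Using compactness of the family of candidate graphs and the implicit function theorem in the analytic category — the non-degeneracy being supplied by the strong positivity of $f$ (which makes the Hessian of the action strictly negative definite on a complement of the obvious null directions, hence the critical point is non-degenerate modulo translations) — I would show that near $\eps_0$ the periodic graph persists and depends analytically on $\eps$, so that $\mathcal{E}$ contains a whole neighborhood of $\eps_0$. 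This makes $\mathcal{E}$ both open and closed in the interior of its "analytic continuation," and a connectedness/unique-continuation argument (the defining equation for the graph is an analytic equation in $\eps$ and in the Fourier coefficients of $u$, truncated and estimated via the analyticity hypotheses (ii)--(iii) on $f$ and $G$) forces $\mathcal{E}$ to be either all of $\R$ or to have no accumulation points, i.e. to consist of isolated points.

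For the final assertion, assume in addition that $f$ has bounded rate and $G$ is non-constant, and suppose for contradiction that $\mathcal{E}$ is not finite; having just excluded infinite discrete sets with accumulation points inside $\R$, the only remaining possibility is $\mathcal{E}=\R$. Then $F_\eps$ has a Lipschitz Lagrangian $(m,n)$-periodic graph for \emph{every} $\eps$, and the bounded-rate hypothesis gives uniform Lipschitz bounds on the corresponding potentials $u_\eps$ independent of $\eps$ (this is where bounded rate is used: it controls the Green bundles uniformly, see Appendix \ref{ALipschitzetGreen}). Passing to the limit $\eps\to\infty$ after rescaling — the action at the minimizer grows linearly in $\eps$ with leading coefficient the average of $G$ along the orbit, while the $\eps$-independent Lipschitz bound forces the orbit to stay in a fixed compact set — I would extract from the Euler--Lagrange equation of $\mathcal{A}_\eps$, divided by $\eps$, a limiting equation asserting that $dG$ vanishes along a full Lagrangian graph's worth of configurations, which, $G$ being $\Z^d$-periodic and the graph projecting onto $\T^d$, forces $G$ to be constant, a contradiction. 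Hence $\mathcal{E}$ is finite. I expect the main obstacle to be the non-degeneracy step: proving that the critical points of the action are non-degenerate modulo the trivial symmetries with estimates uniform enough to run the analytic implicit function theorem, and in particular controlling the possible bifurcation of the (a priori only Lipschitz, not smooth) invariant graph — this is precisely where the geometric study of periodic Lagrangian tori in Appendices \ref{perlagtori} and \ref{ALipschitzetGreen} must be invoked in full strength.
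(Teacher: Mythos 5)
Before anything else: the statement you were asked to prove is Corollary \ref{cor 1}, and in the paper it is an immediate consequence of the final clause of Theorem \ref{teo 1}. If $G$ were non-constant then, since $f$ is assumed to have bounded rate, Theorem \ref{teo 1} says that the set of $\eps$ for which $F_\eps$ has a Lipschitz Lagrangian $(m,n)$-periodic graph consists of at most finitely many points, contradicting the hypothesis that this happens for infinitely many $\eps$; hence $G$ is constant. (Note that the ``either $\R$ or isolated points'' alternative alone would not suffice, since an infinite set of isolated points is possible a priori; the bounded-rate clause is what is really used.) Your proposal instead undertakes a re-proof of Theorem \ref{teo 1} itself, which is not needed here, and the re-proof contains a genuine gap.

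The gap is the non-degeneracy claim on which your analytic implicit function theorem step rests. Critical points of the $n$-step action corresponding to an $(m,n)$-periodic \emph{torus} are never non-degenerate, not even ``modulo translations'': every $q\in\T^d$ on the torus gives a critical configuration, so the Hessian has kernel of dimension at least $d$, and there is no translation symmetry to quotient by once the term $\eps G(q)$ is present. This degeneracy is exactly the fragility phenomenon the theorem is about: periodic tori do \emph{not} persist by an IFT, and typically only isolated periodic orbits survive a perturbation. Moreover, your persistence argument never actually uses that $\eps_0$ is an accumulation point, so if it were correct it would show the set is open; being also closed, it would equal $\R$ whenever non-empty, which is strictly stronger than (and incompatible with) the alternative ``isolated points'' that Theorem \ref{teo 1} explicitly allows. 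In the paper the IFT is applied only to the radial equation $\pi_1\circ F_\eps^n(q,p)=q+m$, whose non-degeneracy is the absence of conjugate points along minimizing orbits (Proposition \ref{Plocaltwist}, Remark \ref{twist}); this produces the complex manifold $\cR^*_{(m,n)}(\C)$, and periodicity and the Lagrangian character are encoded in the holomorphic maps $\Delta$ and $\chi$ of \eqref{Delta}--\eqref{Polyncaract}. The accumulation of parameter values enters only through the identity principle, forcing $\Delta\equiv 0$ and $\chi\equiv\big((X-1)^{2d},{\mathbb O}_{2d}\big)$ on a connected component, after which Propositions \ref{P(m,n)Lagr} and \ref{invarianttori} recover invariant Lagrangian periodic graphs for a whole interval of $\eps$. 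Your final step (dividing the Euler--Lagrange equation by $\eps$ and letting $\eps\to\infty$) is also unsupported: the Lipschitz bound of Proposition \ref{Pnoinvariantgraph}(i) grows with $|\eps|$, and nothing confines the graphs to a fixed compact set in the $p$-direction uniformly in $\eps$. The paper's route is second-order: along any invariant Lagrangian $C^1$ graph of $f_\eps$ the inequality \eqref{Inegdiff} forces $\partial^2_{q^2}S+\partial^2_{Q^2}S+\eps D^2G>0$, and if $G$ is non-constant there are points and directions where $D^2G$ is definite of either sign, so bounded rate rules out any such graph for $|\eps|$ large (Proposition \ref{Pnoinvariantgraph}(ii)), which is what yields finiteness.
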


To the best of our knowledge, Theorem \ref{teo 1} (as well as Corollary \ref{cor 1}) is the first result of this kind for symplectic twist maps. 
We underline that the analysis of the persistence of Lagrangian periodic tori for a {\it single} rotation vector $(m,n)$, which seems to be a weak requirement, is made possible by the regularity assumption on the family of symplectic twist maps, namely analyticity.\\
In the next section, we will discuss how to weaken this latter assumption and still obtain a similar rigidity result.\\
For 2-dimensional  Birkhoff billiards, some results concerning the preservation and the existence of periodic caustics have been discussed in \cite{KZ, KK2021, Zhang}.

\begin{rmk}
(This remark has been pointed out by one of the referees).
Theorem \ref{teo 1} and Corollary \ref{cor 1} could be read in the usual framework of rigidity theories and moduli spaces. In the space of all symplectic twist maps with bounded rate satisfying the analyticity condition, one can define a natural ``{\it broken line}'' equivalence relation. Given $f_0, f_1$ symplectic twist maps with bounded rate and satisfying the analyticity condition, we 
say that $f_0 \sim f_1$ if:
\begin{itemize}[leftmargin=*]
\item[$\bullet$] there exists $G: \T^d\to \R$, analytic with holomorphic extension to $\C^d$, such that the generating function of $f_1$ is given by $S+G$, where $S$ is the generating function of $f_0$,
\item[$\bullet$] there is a coprime pair $(m, n)\in \Z^d\times \N^*$, such that for an infinite set of $\varepsilon$'s, the map $f_\varepsilon$ generated by $S+ \varepsilon G$ admits an $(m,n)$- Lagrangian periodic torus.
\end{itemize}
We say that  $f$ is equivalent to $g$ if there exists a sequence of maps $f_0=f, f_1, \ldots, f_n=g$ such that $f_k \sim f_{k+1}$. This is an equivalence relation and
 Corollary \ref{cor 1} implies that it is trivial, {\it i.e.}, equivalence classes reduce to singletons.
\end{rmk}

\subsubsection{\bf Main results II: on the rigidity of integrable symplectic twist maps}\label{secintroteo2}

Our original motivation for this work was to investigate the rigidity of  integrable symplectic twist maps, namely whether to understand under which deformations of the map it is possible to preserve integrability or some of its features. As we mentioned before, this problem is in fact related to important conjectures in billiard and Hamiltonian dynamics, and such an investigation seems to be a preliminary step in order to tackle them.\\

Let us state our second main result.

\begin{thm}\label{teo 2} 
Let $f: \T^d \times \R^d \righttoleftarrow$ be symplectic twist map and
let $F: \R^d \times \R^d \righttoleftarrow$ be a lift  of $f$ and $S: \R^d\times \R^d \longrightarrow \R$ its generating function.\\
Let $G\in C^2(\T^d)$. 
Consider the family of symplectic twist maps $f_\eps:  \T^d \times \R^d \righttoleftarrow$, with $\eps\in \R$, obtained as symplectic deformation of $f$ by $G$, as in \eqref{dpcm18ott},
and denote by $F_\eps$ a continuous family {of lifts of $f_\eps$}.\\
Assume that:
\begin{enumerate}[(i)]
\item   $f$ is completely integrable (as in Example \ref{exCI}); 
\item  {$f$ is strongly positive}; 
\item $f$  satisfies the analyticity property; 
\item  \label{weak integr}
there exist a basis  $(q_1,\ldots, q_d)$ of $\Q^d$ and $I_1,\ldots, I_d \subset \R$  open intervals, such that for any  $\frac{m}{n}\in \bigcup_{j=1}^d I_jq_j \cap \Q^d$,
$F_\eps$ has a {Lipschitz} Lagrangian $(m,n)$-periodic  graph  {for infinitely many values of $\eps\in \R$, accumulating to $0$}.
\end{enumerate}
Then, $G$ must be identically constant.
\end{thm}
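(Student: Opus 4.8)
The plan is a two–stage argument. In the first stage I would show that hypothesis~(iv) forces $G$ to be a trigonometric polynomial; in the second I would feed this regularity into Theorem~\ref{teo 1} to conclude that $G$ is constant.

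\emph{First stage, the key identity.} Fix a pair $(m,n)$ with $\frac mn\in\bigcup_j I_jq_j\cap\Q^d$, put $p_0:=(\nabla\ell_0)^{-1}(\frac mn)$, and consider the minimal action $g(\eps):=A_\eps(\frac mn)$ of $(m,n)$-periodic configurations for the generating function $S_\eps=S_0+\eps G$. Being an infimum of functions affine in $\eps$, $g$ is concave; since $f_0$ is completely integrable, $g(0)=n\,h_0(\frac mn)$ (realized on the orbits of $\cL_{p_0}$), and testing the configuration $q_k=q+k\frac mn$ shows $g(\eps)\le n\,h_0(\frac mn)+\eps\sum_{k=0}^{n-1}G(q+k\frac mn)$ for every $q$; averaging over $q\in\T^d$ gives $g(\eps)\le n\,h_0(\frac mn)+\eps\,n\overline G$ for all $\eps$, with $\overline G:=\int_{\T^d}G$. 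Hence $\ell(\eps):=n\,h_0(\frac mn)+\eps\,n\overline G$ supports the concave function $g$ at $0$, so $g'_+(0)\le n\overline G\le g'_-(0)$. Now I would use hypothesis~(iv): choose $\eps_k\to0$ of constant sign, say $\eps_k>0$, along which $F_{\eps_k}$ has a Lipschitz Lagrangian $(m,n)$-periodic graph $\cL_{\eps_k}=\mathrm{graph}(\gamma_k)$. By Proposition~\ref{invarianttori} these graphs are $f_{\eps_k}$-invariant, and — being Lipschitz Lagrangian invariant graphs of a positive twist map — action-minimizing (Appendices~\ref{Aactionpertori}, \ref{perlagtori}, \ref{ALipschitzetGreen}), so the orbit through each of their points has action exactly $g(\eps_k)$. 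The uniform Lipschitz bounds of Appendix~\ref{ALipschitzetGreen}, Remark~\ref{G20}(ii), and the fact that $\cL_{p_0}$ is the \emph{only} $(m,n)$-periodic graph of $F_0$ (clear from \eqref{ECI}) force $\gamma_k\to p_0$ uniformly, hence the iterate maps on $\cL_{\eps_k}$ converge uniformly to the rotations $q\mapsto q+k\frac mn$. Writing $g(\eps_k)$ as the $q$-average of the orbit action and using convexity of $h_0$ together with the telescoping identity $q^{\eps_k}_n-q^{\eps_k}_0=m$, one gets $g(\eps_k)-g(0)\ge\eps_k\int_{\T^d}\sum_j G(q^{\eps_k}_j)\,dq$ with the integral tending to $n\overline G$; since $g'_+(0)$ exists, this gives $g'_+(0)\ge n\overline G$, hence $g'_+(0)=n\overline G$. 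Comparing with the upper bound $g(\eps)\le n\,h_0(\frac mn)+\eps\min_q\sum_k G(q+k\frac mn)$ (for $\eps>0$) yields $n\overline G\le\min_q\sum_{k=0}^{n-1}G(q+k\frac mn)$, and since the right–hand side has mean $n\overline G$ it must be constant:
\[
\sum_{k=0}^{n-1}G\Big(q+k\tfrac mn\Big)=n\overline G\qquad\forall\,q\in\T^d
\]
(if the $\eps_k$ are negative one argues symmetrically via $g'_-(0)$ and $\max_q$).

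\emph{First stage, from the identity to a trigonometric polynomial.} Writing $G=\sum_\nu\widehat G(\nu)e^{2\pi i\nu\cdot q}$, the identity says $\widehat G(\nu)=0$ whenever $\nu\ne0$ and $n\mid\nu\cdot m$. Let $w_j\in\Z^d$ be the primitive vector positively parallel to $q_j$ and let $\lambda_j>0$ be the length of $\{t\in\R:tw_j\in I_jq_j\}$. For $\nu\ne0$: if $\nu\cdot w_j=0$ for some $j$, then $n\mid\nu\cdot m$ for any admissible $(m,n)$ in direction $w_j$, so $\widehat G(\nu)=0$; if $|\nu\cdot w_j|\lambda_j>1$ for some $j$, then the interval $|\nu\cdot w_j|\cdot\{t:tw_j\in I_jq_j\}$ contains an integer, whose reduced fraction $m'/n'$ has $n'\mid|\nu\cdot w_j|$, so $(m,n)=(m'w_j,n')$ is admissible and $n\mid\nu\cdot m$, again giving $\widehat G(\nu)=0$. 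The remaining $\nu$ satisfy $0<|\nu\cdot w_j|\le1/\lambda_j$ for every $j$, a bounded subset of $\Z^d$ because $\nu\mapsto(\nu\cdot w_j)_j$ is a linear isomorphism; hence $\widehat G(\nu)=0$ for all but finitely many $\nu$, so $G$ is a trigonometric polynomial, and in particular it extends holomorphically to $\C^d$.

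\emph{Second stage, and the obstacle.} Since $f_0$ is strongly positive it has bounded rate ($\partial_{q,q}S_0=\partial_{Q,Q}S_0=D^2h_0(Q-q)$, which is uniformly bounded as the inverse of $D^2\ell_0$), and it satisfies the analyticity property; with $G$ now holomorphic on $\C^d$ we are exactly in the hypotheses of Theorem~\ref{teo 1}, including bounded rate. Fixing any admissible $(m_0,n_0)$, hypothesis~(iv) makes the set of $\eps$ for which $F_\eps$ has a Lipschitz Lagrangian $(m_0,n_0)$-periodic graph infinite, which by Theorem~\ref{teo 1} is impossible unless $G$ is constant; hence $G$ is constant. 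The genuinely hard step is the orbit identity of the first stage: it hinges on the two structural properties of Lipschitz Lagrangian periodic tori of positive twist maps established in the Appendices — that they are action-minimizing (so the torus action coincides with the Mather value $A_\eps(\frac mn)$), and that they obey uniform Lipschitz estimates (so that, by complete integrability of $f_0$, they degenerate to the flat torus $\cL_{p_0}$ as $\eps\to0$). Everything else — the one–variable concavity/support–line pinching, the Fourier argument on $\T^d$, and the appeal to Theorem~\ref{teo 1} — is routine.
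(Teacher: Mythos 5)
Your proposal is correct and reaches the same two intermediate targets as the paper --- the identity $\sum_{j=0}^{n-1}G(q+j\frac{m}{n})\equiv\mathrm{const}$ for every admissible $(m,n)$, hence the vanishing of all but finitely many Fourier coefficients of $G$ (your Case 1/Case 2 dichotomy is Lemma \ref{lemmasetvectors} in slightly different clothing), hence the reduction to Theorem \ref{teo 1} --- but your derivation of the key identity is genuinely different. The paper (Lemma \ref{lemmino base}) uses the smooth family $\Gamma_{m,n}(\eps,\cdot)$ supplied by the implicit function theorem, expands the torus action to first order in $\eps$, cancels the first variation of the $S$-part by the telescoping/criticality computation, and reads off the Melnikov term $\sum_j G(q+j\frac{m}{n})$, which must be constant because Proposition \ref{torusactionminimizing} makes the full action constant for each $\eps_k$. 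You instead run a convex-analytic argument on the minimal periodic action $g(\eps)$: concavity in $\eps$, the support line $nh_0(\frac{m}{n})+\eps\, n\overline{G}$ from averaging the flat test configurations, and a matching bound on the difference quotients along $\eps_k$ from Jensen's inequality for $h_0$ plus the $C^0$-convergence of the tori to $\cL_{p_0}$; pinching $g'_{\pm}(0)$ forces $\min_q\sum_j G(q+j\frac{m}{n})\ge n\overline{G}$, i.e.\ constancy. Your route never differentiates the graphs in $\eps$ --- only uniform convergence is needed, which you correctly obtain from the uniform Lipschitz bounds, compactness and uniqueness of the unperturbed periodic torus --- at the price of leaning harder on complete integrability (convexity of $h_0$ and the explicit unperturbed minimizers), which the paper's lemma uses anyway. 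Two points to make explicit in a write-up: the identification $g(\eps_k)=\cA^{\eps_k}_{(m,n)}$ requires observing, as in the proof of Proposition \ref{p(m,n)minimizing}, that orbits on the torus minimize among \emph{all} $(m,n)$-periodic configurations and not merely among configurations with fixed endpoints; and equiboundedness of the $\gamma_k$ (needed for Ascoli--Arzel\`a) comes from the superlinearity argument in the proof of Lemma \ref{Pdiverse}, item (iv), not from the Lipschitz estimates alone.
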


\medskip

\begin{rmk}
Differently from Theorem \ref{teo 1}, we do not  ask any extra assumption on the regularity of $G$. In fact, 
as we will see in Proposition \ref{piazzetta},
the assumption 
 on the existence of ``sufficiently many'' Lagrangian periodic graphs for {$(F_\eps)_{\eps \in \R}$}
 {\rm (}in the sense of item \ref{weak integr}{\rm )} implies that $G$ must be a trigonometric polynomial, thus allowing us to apply Theorem \ref{teo 1}.
Note that being $f$  completely integrable,  the hypothesis of strong positivity automatically implies that $f$ is of bounded-rate (see also Remark \ref{rmkciboundedrate}).
{We remark that it is important in the proof that the sets of $\eps\in \R$ (which might vary for different rotation vectors) accumulate to $0$ (see Lemma \ref{lemmino base})}.\\
\end{rmk}

{For a given $\eps \in \R$, the assumption on $F_\eps$ in item \ref{weak integr} is satisfied, for example, if there exists
an open set ${\mathcal A} \subset \R^d$ 
such that $F_\eps$ has a {Lipschitz} Lagrangian $(m,n)$- periodic  graph  for any  $\frac{m}{n}\in {\mathcal A} \cap \Q^d$. }
This property can be considered as a weaker notion of integrability, in some literature called
{\it weak rational integrability} (see for instance \cite{BialyMironov2020, KS2018, HKS2018}).\\
In order to see that the condition of item \ref{weak integr} is satisfied, one can choose $q_1,\ldots, q_d \in \Q^d$ linearly independent, such that the half-lines
\begin{eqnarray*}
\sigma_{q_i}: [0,\infty) &\longrightarrow& \R^d\\
t &\longmapsto& tq_i
\end{eqnarray*}
intersect ${\mathcal A}$ (this is possible since $\mathcal A$ is open).
For every $j=1,\ldots, d$, choose $0<a_j<b_j$ such that $\sigma_{q_j}((a_j,b_j)) \subset \mathcal A$, and let $I_j:=(a_j,b_j)$.\\

\begin{rmk} 
Note that in dimension $d=1$  weak rational integrability implies  the existence of an open set foliated by invariant curves  ({\it i.e.}, local $C^0$-integrability).
 Although this property is not needed for our results, we provide a sketch of its proof for interested readers (see also  \cite[Theorem 3]{MS}, where a similar argument has been used).\\
By assumption,  for  every $\frac p q \in (a,b) \cap \Q$, with $a<b$, there exists an invariant curve of rotation number $\frac p q$ that is foliated by periodic points;  this curve is the Lipschitz graph of a function  $\gamma_{p/q}: \T \longrightarrow \R$.
Applying Arzel\`a-Ascoli theorem (these graphs are equiLipschitz, see Proposition \ref{lemmac1} (ii)), one can obtain an invariant curve for every rotation number $h\in (a,b)$, which is the graph of a Lipschitz function  $\gamma_{h}: \T \longrightarrow \R$.
Observe that all these graphs are disjoint and that for every $h\in (a,b)$ there exists a unique invariant curve of rotation number $h$ (when $h$ is rational, uniqueness follows from the fact that all of its points are periodic).\\
We denote by $\Lambda$ the union of the graphs of $\gamma_h$ for $h\in (a,b)$.
 We want to show that $\Lambda$ is open. Let us define 
\begin{eqnarray*}
 \Phi : \T \times (a,b) &\longrightarrow& \T\times \R\\
  (q, h) &\longmapsto& (q,\gamma_h(q)).
 \end{eqnarray*}
One can prove that  $\Phi: \T\times (a,b) \to \T\times \R$ is continuous and  injective, hence it is an open map
(it follows from the invariance of domain theorem, see \cite[Theorem 2B.3]{Hatcher}). Therefore, $\Lambda=\Phi(\T\times (a,b))$ is open.\\
\end{rmk}

Hence, we can deduce the following Corollary from Theorem \ref{teo 2}.

\begin{cor} Under the hypotheses $(i)-(iii)$ and the notations of Theorem \ref{teo 2},  and assuming (instead of (iv))
\begin{itemize}
\item[(iv$^*$)] there exists an open set ${\mathcal A} \subset \R^d$ 
{such that, for infinitely many $\eps\in \R$ accumulating to $0$, the map $F_\eps$ has a {Lipschitz} Lagrangian $(m,n)$- periodic  graph for any  $\frac{m}{n}\in {\mathcal A} \cap \Q^d$},
\end{itemize}
then,  $G$ must be identically constant.

\end{cor}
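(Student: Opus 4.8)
The plan is to deduce the Corollary directly from Theorem \ref{teo 2} by verifying that hypothesis (iv$^*$) implies hypothesis (iv) of the theorem. The idea is geometric and elementary: condition (iv$^*$) gives a single open set $\mathcal A\subset\R^d$ which, for infinitely many $\eps$ accumulating to $0$, carries a Lipschitz Lagrangian $(m,n)$-periodic graph for \emph{every} rational $\frac mn\in\mathcal A\cap\Q^d$; condition (iv) instead asks for $d$ rationally independent rays $\sigma_{q_j}(t)=tq_j$ and intervals $I_j$ with $\sigma_{q_j}(I_j)\subset\mathcal A$, but allows the set of good $\eps$'s to vary with the rotation vector. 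So (iv$^*$) is genuinely stronger and the reduction is just a matter of choosing the rays.

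First I would pick $d$ vectors $q_1,\dots,q_d\in\Q^d$ that are linearly independent over $\R$ (equivalently, a basis of $\Q^d$) and such that each half-line $\sigma_{q_j}:[0,\infty)\to\R^d$, $t\mapsto tq_j$, meets $\mathcal A$. This is possible because $\mathcal A$ is a nonempty open set: rational directions are dense in the sphere $S^{d-1}$, so one can first choose a point $a\in\mathcal A$, then perturb the direction $a/\|a\|$ slightly to a rational direction whose ray still enters $\mathcal A$, and repeat $d$ times while keeping linear independence (a generic small perturbation preserves independence since the dependent configurations form a measure-zero set). For each $j$, since $\mathcal A$ is open and $\sigma_{q_j}$ is continuous, the preimage $\sigma_{q_j}^{-1}(\mathcal A)$ is open and nonempty, so it contains an interval $(a_j,b_j)$ with $0<a_j<b_j$; set $I_j:=(a_j,b_j)$, so that $\sigma_{q_j}(I_j)\subset\mathcal A$.

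Next I would check that with these choices, item (iv) of Theorem \ref{teo 2} holds. Any $\frac mn\in\bigcup_{j=1}^d I_jq_j\cap\Q^d$ lies, for some $j$, in $\sigma_{q_j}(I_j)\subset\mathcal A$, hence in $\mathcal A\cap\Q^d$. By (iv$^*$) there are infinitely many $\eps\in\R$ accumulating to $0$ for which $F_\eps$ admits a Lipschitz Lagrangian $(m,n)$-periodic graph — indeed the \emph{same} infinite set of $\eps$'s works for all such rotation vectors, which is more than (iv) requires. Since $q_1,\dots,q_d$ form a basis of $\Q^d$ and hypotheses (i)–(iii) of Theorem \ref{teo 2} are inherited verbatim from the Corollary's hypotheses, Theorem \ref{teo 2} applies and yields that $G$ is identically constant.

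I do not expect a serious obstacle here: the only point requiring a little care is the simultaneous choice of the $q_j$'s, ensuring both that their rays enter $\mathcal A$ and that they remain linearly independent. Both are generic/open conditions — denseness of rational directions handles the first, and the complement of the linear-dependence locus is open and dense handles the second — so an iterative choice (pick $q_1$ with $\sigma_{q_1}$ hitting $\mathcal A$, then $q_2$ likewise and independent of $q_1$, and so on) goes through. Everything else is an immediate invocation of Theorem \ref{teo 2}.
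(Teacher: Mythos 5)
Your proposal is correct and is essentially the paper's own argument: the paper reduces (iv$^*$) to (iv) in exactly the same way, by choosing linearly independent $q_1,\ldots,q_d\in\Q^d$ whose half-lines $\sigma_{q_j}$ meet the open set $\mathcal A$, selecting intervals $I_j=(a_j,b_j)$ with $\sigma_{q_j}(I_j)\subset\mathcal A$, and then invoking Theorem \ref{teo 2}.
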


\bigskip
\begin{rmk}\label{BialyMcKaySuris}
{\rm (i)} {Bialy and MacKay  proved in \cite{BialyMcKay2004} that a generalized standard map of $\T^d\times \R^d$ that has no conjugate point\footnote{
We recall that, for a map $f:\T^d\times \R^d \righttoleftarrow$, two points $(q, p)$ and $f^n(q, p)$ with $n\not=0$ are said to be {\it conjugate} if 
$ Df^n(q, p) \big(\{0\}\times \R^d\big)\cap\big(\{ 0\}\times \R^d\big) \neq \{ (0,0)\}.$}
 (e.g., a {completely} integrable symplectic twist map has no conjugate point) has to be of the form $(q, p)\mapsto(q+p, p)$. 
Their proof uses in a crucial way the fact that  the whole $2d$-dimensional annulus is foliated by invariant tori and then it does not imply our results, even for their choice of a particular function $\ell_0$.\\
{{\rm (ii)}}
In dimension 2, Suris \cite{Suris} exhibited an example of a generalized standard map that is {integrable} {(for all values of the parameter $\epsilon$ for which it is defined)}, but his example has singularities and does not satisfy the assumptions of Theorem \ref{teo 2}.}\\
{{\rm (iii)} In a different direction,  Chen, Damjanovi\'{c} and Petrovi\'c   investigate in \cite{CDP} sufficient conditions for an exact symplectic twist map to be integrable; in particular, they show that if such a map
is close to $(q, p)\mapsto (q+p, p)$ and  commutes with a map that is close to $(q, p)\mapsto (q+\alpha, p)$ and  semi-conjugate to the Diophantine rotation $q\mapsto q+\alpha$, then it is integrable.\\
}
\end{rmk}

\medskip

\subsection*{Acknowledgements}
J.E.M and A.S. acknowledge the support of the INdAM-GNAMPA grant ``{\it Spectral and dynamical properties of Hamiltonian systems}''. J.E.M. has been also supported by the
research project PRIN 2020XBFL ``{\it Hamiltonian and dispersive PDEs}'' of the Italian Ministry of Education
and Research (MIUR).
A.S. has been also supported by the
research project  PRIN Project 2017S35EHN ``{\it Regular and stochastic behavior in
dynamical systems}''  and  by  the MIUR Department of Excellence grant 2023-27 MatMod@TOV.
J.E.M. and A.S. would like to thank Stefano Marmi and  Laurent Stolovitch for  enriching comments and discussions. A.S. also wishes to thank Stefano Trapani for very helpful discussions. The three authors are grateful to Jianxing Du, Xifeng Su and Philippe Thieullen for pointing out a mistake  in Proposition B.1 in a previous version of the article. Finally, they are grateful to the anonymous referees for their careful reading of the paper and for very helpful comments. \\

\medskip


\section{ {Some properties of}  $(m, n)$-periodic graphs}\label{Aactionpertori}

In this section we collect and prove several properties of $(m, n)$-periodic tori, which will be used in the proofs of our main results, as well as being  interesting {\it per se}.\\
We shall discuss:
\begin{itemize}
\item[$\bullet$] Action-minimizing properties of Lipschitz Lagrangian $(m, n)$-periodic graphs (section \ref{subsecactionminprop}). In particular, we prove that:
\begin{itemize} 
\item orbits starting on a Lipschitz  Lagrangian $(m, n)$-periodic graph have all the same action (Proposition \ref{torusactionminimizing})  and, under suitable growth conditions on the generating function, they are action-minimizing (Proposition \ref{p(m,n)minimizing});
\item for positive symplectic twist maps, there is at most on Lipschitz Lagrangian $(m,n)$-periodic graph (Proposition \ref{uniqtorus});
\item  for positive symplectic twist maps, Lipschitz Lagrangian $(m,n)$-periodic graphs are indeed invariant, hence completely periodic (Proposition \ref{invarianttori}).\\
\end{itemize}

\item[$\bullet$] Regularity properties of Lipschitz Lagrangian $(m, n)$-periodic graphs (section \ref{secregularityLaggraphs}). In particular, we prove that:
\begin{itemize}
\item  for positive symplectic twist maps, Lipschitz Lagrangian $(m,n)$-periodic graphs are  as regular as the maps is (Proposition \ref{Plocaltwist}).\\
\end{itemize}

\item[$\bullet$] Symplectic properties of $(m, n)$-periodic graphs (section \ref{perlagtori}). In particular:
\begin{itemize}
\item  we provide sufficient conditions that ensures that a $(m, n)$-periodic graph is  Lagrangian (Proposition \ref{P(m,n)Lagr}).\\
\end{itemize}

\end{itemize}

\subsection{Action-minimizing properties of Lipschitz Lagrangian $(m, n)$-periodic graphs} \label{subsecactionminprop}

In the following, let us assume that $F:   \R^{d}\times \R^d \righttoleftarrow$ 
is a lift of a symplectic twist map  $f: \T^d\times \R^d \righttoleftarrow$   with generating function $S(q,Q)$.\\

Let us start with the definition of {\it action}.

\begin{defn}[Action]
Let $n\in \N$. Given a finite sequence $(q_0, \ldots, q_n)\in \R^{d(n+1)}$, we define its {\it action} as:
 $$
 {\cA(q_0,\ldots, q_n)} := \sum_{j=0}^{n-1} S(q_j,q_{j+1}).
 $$
\end{defn}

\begin{rmk}\label{variationalprinciple}
It is a classical result in the study of symplectic twist maps that a sequence $(q_0, \ldots, q_n) \in \R^{d(n+1)} $ is the projection of an orbit segment of $F$ on the $q$-component
(i.e., there exists $(q_0,p_0) \in \R^d\times \R^d$ such that $q_j = \pi_1 \circ F^j(q_0,p_0)$, for every $j=1,\ldots, n$, where $\pi_1(q, p):=q$),
if and only if it is a critical point of $\cA$ restricted to the subspace of sequences $(w_0, \ldots, w_n)\in(\R^d)^{n+1}$ with fixed endpoints $w_0=q_0$ and $w_n=q_n$. See for instance \cite[Corollary 5.5]{Gole}.\\
\end{rmk}

\begin{defn}[Action-minimizing sequences and orbits]\label{Dminimizer}
A sequence $(q_j)_{j\in\Z}\in(\R^d)^\Z$ is  called {\it action-minimizing} (or simply {\it minimizing}) if for any $k,h \in \Z$, with $k<h$,   the sequence $(q_k, \ldots, q_h)$ minimizes the action on the subspace of sequences $(w_0, \ldots, w_n)\in(\R^d)^{n+1}$ with fixed endpoints $w_0=q_0$ and $w_n=q_n$.\\
 {An orbit of $F$ is said to be action-minimizing, if its projection on the first component is an action-minimizing sequence.}\\
\end{defn}

Let us first prove the following  result on the action of  sequences obtained as projection, on the $q$-coordinate, of orbits starting on Lipschitz Lagrangian $(m,n)$-periodic graphs.\\

\begin{prop}\label{torusactionminimizing}
We assume that $\cL$ is a  {Lipschitz} Lagrangian $(m,n)$-periodic  graph.   
 For every $(q, p)\in \cL$,   {if we denote its orbit by} $(q_j(q,p), p_j(q,p)):=F^j(q, p)$, $j=0, \ldots, n$,  then the function
$$
 {\cW(q,p)}:= \cA(q, q_1(q,p), \ldots, q_n(q,p))
$$
is constant on $\cL$.  
\end{prop}

\begin{proof}
Since $\cL$ is a {$\Z^d$-periodic Lipschitz Lagrangian graph}, it corresponds to the graph of $c+du$, where $c\in\R^d$ and $u:\R^d\rightarrow \R$ is a $\Z^d$-periodic   $C^{1,1}$  function (see Remark \ref{G20}).
Let us consider the function $\cE: (\R^d)^{n+1}\rightarrow \R$ defined by 
$$\cE(q_0, \dots, q_n):=c\cdot (q_0-q_n) +u(q_0) - u(q_n) +\sum_{j=0}^{n-1}S(q_{j}, q_{j+1}) .$$

A point $(q_0, \dots, q_n)$ is a critical point of $\cE$ if and only if 
\begin{equation}\label{critpointeq1}
c+\nabla u(q_0)=-\partial_q S(q_0, q_1),\quad  c+\nabla u(q_n)= \partial_Q S(q_{n-1}, q_n)
\end{equation}
and
\begin{equation}\label{critpointeq2}
 \partial_Q S(q_{j-1}, q_j)+\partial_qS(q_j, q_{j+1})=0 \qquad \forall j\in \{ 1, \ldots, n-1\}.
\end{equation}
The two first equations mean that $$(q_0, -\partial_q S(q_0, q_1))\in\cL\quad {\rm and}\quad (q_n, \partial_Q S(q_{n-1}, q_n))\in\cL,$$ while the latter ones state that $q_j = \pi_1 \circ F^j (q_0, -\partial_q S(q_0, q_{1}))$ for every $j=0,\ldots, n$, 
where $\pi_1(q, p):=q$ denotes the projection on the $q$-component,
hence, they correspond to an orbit of $F$.
Moreover, since 
$\cL$ is $(m, n)$-periodic, we also have that $q_n=q_0+m$ and, since $u$ is $\Z^d$-periodic,  $u(q_n)=u(q_0)$. Therefore, if $(q_0, \ldots, q_n)$ is a critical point, then 
\begin{equation}\label{Chopin}
\cE(q_0, \dots, q_n)=-c\cdot m+\cW(q_0,c+\nabla u (q_0)).
\end{equation}
In particular, if $(q, c+\nabla u(q))\in \cL$ and $q_j:=\pi_1\circ F^j(q, c+\nabla u(q))$, $j=0, \ldots, n$, the corresponding sequence of points $(q_0,\ldots, q_n)$ give rise to a subset of $(\R^d)^{n+1}$ consisting of critical points of $\cE$. This implies 
that $\cE$ is constant on this set. Using \eqref{Chopin}, one concludes that  $\cW(q,p)$ restricted to $\cL$  is constant.
\end{proof}

\bigskip

Let us now prove the following property of action-minimizing property for  Lipschitz Lagrangian $(m, n)$-periodic graphs (see also  \cite[Theorem 35.2]{Gole} and \cite[Appendix 2]{McKMS89} where it is attributed to Herman).\\
 
 \begin{prop}\label{p(m,n)minimizing}
 Let $\cL$ be a  {Lipschitz}  Lagrangian $(m, n)$-periodic  graph for $F$. Assume that $F$ admits a generating function  $S(q,Q)$ that satisfies the following superlinearity condition
\begin{equation}\label{Esuperlin}\lim_{\| Q-q\|\rightarrow +\infty} \frac{S(q, Q)}{\| Q-q\|}=+\infty.\end{equation}
 Then, every orbit of $F$ starting at a point of  $\cL$ is action-minimizing.
 \end{prop}
 
{

 \begin{proof}
 We use the same notation that as in Proposition \ref{torusactionminimizing}. For $q_0,\ldots, q_n \in \R^d$ let
 $$\cE(q_0, \dots, q_n)=c\cdot (q_0-q_n)+u(q_0) - u(q_n) +\sum_{j=1}^n S(q_{j-1}, q_j).$$
 Observe that $$\cE(q_0+ \nu, \dots, q_n+\nu)=\cE(q_0, \dots, q_n) \qquad \forall \nu \in\Z^d;$$
 moreover, the superlinearity hypothesis implies that 
 $$\lim_{\max_{1\leq j\leq n}\|q_j-q_{j-1}\|\rightarrow +\infty}\cE(q_0, \dots, q_n)=+\infty.$$
 Hence, $\cE$ has a minimum, which is attained on projections of orbits that start on $\cL$, as
 all  critical points of $\cE$ (see the proof of Proposition \ref{torusactionminimizing}).
 Observe that for any $k\in\N^*$, the hypotheses of the proposition are satisfied for $(km, kn)$ replacing $(m, n)$. We deduce that every piece of projected orbit with length $kn$ that starts on $\cL$ has minimal action amongst all sequences with the same length and the same end-points.
This implies that every piece of projected orbit of a point of $\cL$ is indeed an action-minimizer  {(otherwise one could reduce the action of a suitable segment of projected orbit of length $kn$, for $k$ sufficiently large, contradicting its minimality)}.

 \end{proof}

\begin{rmk}\label{twist}
{\bf (i)} The superlinearity condition \eqref{Esuperlin} is satisfied by any positive symplectic twist map $f: \T^d\times \R^d \righttoleftarrow$ (see, for example,  \cite[Lemma 27.2]{Gole}). \\
\noindent {\bf (ii)}    Recall that two points $(q, p)$ and $F^n(q, p)$ with $n\not=0$ are said to be {\it conjugate} if 
$$ DF^n(q, p) \big(\{0\}\times \R^d\big)\cap\big(\{ 0\}\times \R^d\big) \neq \{ (0,0)\}.$$
It is known that  along every  {action-minimizing orbit of a  positive symplectic twist map}, there are no conjugate points (see \cite[Proposition 6]{Arnaud:2013} or \cite{BialyMcKay2004}).
Therefore, if $\cL$ is a  {Lipschitz}  Lagrangian $(m, n)$-periodic  graph for $F$, then for every $(q,p)\in \cL$, if we denote by $(q_n, p_n):=F^n(q, p)$, we have $\det(\partial_pq_n)\neq0$.\\
 \end{rmk}

\bigskip

Using the action-minimizing property proved in Proposition \ref{p(m,n)minimizing}, it follows the following uniqueness result for Lipschitz Lagrangian  $(m, n)$-periodic graphs.\\

   \begin{prop}\label{uniqtorus} Assume that $f: \T^d\times \R^d \righttoleftarrow$ is a  positive symplectic twist map. \\
\noindent  {\bf {(i)}}  For every $(m,n)\in\Z^d\times \N^*$ and  for every $q\in \R^d$, there exists at most one $p\in \R^d$ such $F^n(q, p)=(q+m, p)$ and the {corresponding orbit starting at $(q, p)$ is action-minimizing}.\\
     {\bf {(ii)}}  For every $(m,n)\in\Z^d\times \N^*$, there is at most one {Lipschitz} Lagrangian  $(m, n)$-periodic graph of $F$.
 \end{prop}
 
 \medskip

 \begin{proof}
 {\bf (i)} { Let us assume that two such points $(q, p)$ and $({q}, P)$ exist. We use the notation $(q_j, p_j)=F^j(q, p)$ and $(Q_j, P_j)=F^j({q}, P)$. 
 Then, the sequence $(q,q_1, \dots, {q_n}=q+m,Q_1+m, Q_2+m, \dots , Q_n+m=q+2m) $ is such that its action equals 
 $$\cA(q,q_1,\dots,  q_n)+\cA(Q_n,Q_{n+1},  \dots , Q_{2n})
 =\cA(q_0, \dots, q_{2n}).
 $$
 We  deduce that this sequence is action-minimizing and hence the projection of an orbit (see Remark \ref{variationalprinciple}). In particular:
 $$
 \partial_Q S(q_{n-1}, q_n)+\partial_qS(q+m, Q_1+m)=0  
 $$
or equivalently
$ \partial_Q S(q_{n-1}, q_n) = - \partial_q S(q+m, Q_1+m)$. It follows from the definition of generating function that  $p=P$.\\
}

\noindent {\bf (ii)}  We know that every orbit { that starts} on a Lipschitz Lagrangian  $(m, n)$-periodic graph of $F$ is action-minimizing (see Proposition \ref{p(m,n)minimizing}). Hence the conclusion is a consequence of what has been proven in item (i).
 \end{proof}

\bigskip

Finally, let us prove that for  positive symplectic twist maps, Lipschitz Lagrangian $(m, n)$-periodic  tori are indeed invariant, hence they are $(m,n)$-completely periodic.

\begin{prop}\label{invarianttori}
Assume that $f: \T^d\times \R^d \righttoleftarrow$ is a  positive symplectic twist map. Then,
every Lipschitz Lagrangian $(m, n)$-periodic   graph of $F$  is invariant.
\end{prop}

 \begin{proof}
 We denote by $\cL$ the Lagrangian $(m, n)$-periodic  graph of $F$.  Let us consider $(q, p)\in\cL$ and let us prove that $F(q, p)\in\cL$. We know from Proposition \ref{p(m,n)minimizing} that the projected orbit of $(q,p)$ is action-minimizing. Hence, the  projected orbit starting at $F(q,p)=(q_1, p_1)$ is also action-minimizing. Moreover, there exists $p_2\in\R^d$ such that $(q_1, p_2)\in \cL$ and the projected orbit of $(q_1, p_2)$ is also action-minimizing (Proposition \ref{p(m,n)minimizing}). By Proposition \ref{uniqtorus} (i), we deduce that $p_1=p_2$ and therefore $F(q,p)=(q_1, p_1) \in \cL$, thus concluding that $\cL$ is invariant for $F$.
 \end{proof}
 
}

\bigskip

\subsection{Regularity of  Lipschitz Lagrangian $(m, n)$-periodic graphs} \label{secregularityLaggraphs}
Let $F:   \R^{d}\times \R^d \righttoleftarrow$ 
be a lift of a symplectic twist map  $f: \T^d\times \R^d \righttoleftarrow$. In this subsection we prove that Lipschitz Lagrangian $(m, n)$-periodic graphs of $F$ are as regular as $F$ is.

Let us start with the following lemma.\\

\begin{lemma}\label{lemreg}
Let $\Phi: \R^d \times \R^d \righttoleftarrow$ be the lift of a $C^k$ diffeomorphism $\varphi: \T^d \times \R^d \righttoleftarrow$, $k\geq 1$, $k=\infty$, or analytic. 
Let $\eta:\R^d \to \R^d$ a continuous $\Z^d$-periodic function and  denote $\cL := {\rm graph(\eta)} \subset \R^d\times \R^d$ and by ${\rm Id}_\cL$ the identity function on $\cL$.
Assume that:
\begin{itemize}
\item[$\bullet$]  for every $(q, p)\in \cL, \partial_p(\pi_1\circ \Phi)(q,p)$ is invertible, where $\pi_1: \R^d \times \R^d\longrightarrow \R^d$ denotes the projection on the $q$-component;
\item[$\bullet$]  $\Phi_{|\cL}\equiv{\rm Id}_\cL$ .
\end{itemize}
Then,  $\eta$ is as regular as $\Phi$ is.
\end{lemma}

\begin{proof} 
For $q_0\in \R^d$, let us solve $\pi_1\circ \Phi(q, p)=q$ in a neighbourhood of $(q_0, \eta (q_0)) \in \cL$.
     Since  $\partial_p(\pi_1\circ \Phi)(q_0, \eta(q_0))$ is invertible, by the implicit function theorem, there exists a neighbourhood $U\times V$ of $(q_0, \eta (q_0))$, where $U,V\subset \R^d$, and a  function $\nu:U\to V$, which is as regular as $\Phi$,  such that
    $$\forall (q, p)\in U\times V: \qquad \pi_1\circ \Phi(q, p)=q \quad \Longleftrightarrow \quad p=\nu (q).$$ The continuity of $\eta$ and $\nu$ at $q_0$ implies that $\eta=\nu$ in a neighbourhood of $q_0$. Hence $\eta$ is as regular as $\Phi$ is. 
    \end{proof}
    
    \medskip

We can now deduce the following regularity result for Lipschitz Lagrangian $(m,n)$ periodic graphs of  a positive symplectic twist map.

\begin{prop}\label{Plocaltwist}
Let $F: \R^{d}\times \R^d \righttoleftarrow$  be a lift of a positive symplectic twist map  $f: \T^d\times \R^d \righttoleftarrow$.
If $\cL$ is a  {Lipschitz} Lagrangian $(m,n)$-periodic  graph of $F$, then $\cL$ is as regular as $F$ is.
\end{prop}

\begin{proof}
First of all, observe that since $\cL$ is a $(m,n)$-periodic graph, then $F^n_{|\cL}\equiv{\rm Id}_\cL$.\\
Moreover, it follows from Proposition \ref{p(m,n)minimizing} that every  orbit of $F$ starting at a point of  $\cL$ is action-minimizing (we noticed in Remark \ref{twist} (i) that the superlinearity condition for the generating function is satisfied for positive symplectic twist maps). 
This implies that along an orbit starting at $(q,p)\in \cL$,  there are no conjugates points (see Remark \ref{twist} (ii)), {\i.e.}, 
if we denote by $(q_n, p_n):=F^n(q, p)$, we have $\det(\partial_p q_n) \neq0$. This means that $\partial_p(\pi_1\circ F^n)(q,p)$ is invertible for every $(q,p)\in \cL$.
Hence, Lemma \ref{lemreg} implies that $\cL$ is as regular as $F$ is.
\end{proof}

\bigskip

\subsection{Symplectic properties of $(m, n)$-periodic graphs}\label{perlagtori}
All properties that we proved in sections \ref{subsecactionminprop} and \ref{secregularityLaggraphs} are based on the assumptions that the $(m, n)$-periodic graphs are Lagrangian. 
In this section, we provide some sufficient conditions that imply this property.\\

 More specifically, we shall prove the following proposition.

\begin{prop}\label{P(m,n)Lagr}
Let $F:   \R^{d} \times \R^d \righttoleftarrow$ be a lift of a  symplectic twist map  $f: \T^d\times \R^d\righttoleftarrow$.
Assume that $\cL={\rm graph}(\gamma)$ is a {continuous} $(m, n)$-periodic graph of $F$ such that for all $q\in \R^d$
$$\det\Big(\partial_p(\pi_1\circ F^n)(q, \gamma(q))\Big)\neq0\quad{\rm and} \quad \det\Big(DF^n(q, \gamma(q))- X\,{\mathbb I_{2d}}\Big)=(X-1)^{2d},$$
where ${\mathbb I_{2d}}$ denotes the $2d$-dimensional identity matrix and $X$ is a complex variable.\\
Then, {$\cL$ is $C^1$ and} {the two following assertions are equivalent.
\begin{itemize}
\item[{\rm (i)}] $\cL$ is Lagrangian;
\item[{\rm (ii)}] $\Big({DF^n(q, \gamma(q)) -\mathbb I_{2d}} \Big)^2={\mathbb O}_{2d}$, {$\forall q\in\R^d$}.\\
\end{itemize}}
\end{prop}

\medskip
Let us first prove the following Lemma.

\begin{lemma}\label{Leigenspace}
 Under the assumptions of  Proposition \ref{P(m,n)Lagr}, {$\cL$ is $C^1$ and} we have 
 $$T_{(q, \gamma(q))}\cL=\ker (DF^n{(q, \gamma(q))}-{\mathbb I}_{2d}) \qquad \forall\; q\in \R^d$$ 
 where $T_{(q, \gamma(q))}\cL$ denotes the tangent space to $\cL$ at the point $(q, \gamma(q))$.
\end{lemma}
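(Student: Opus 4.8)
The first task is to establish that $\cL$ is $C^1$. This is where the non-degeneracy hypothesis $\det\big(\partial_p(\pi_1\circ F^n)(q,\gamma(q))\big)\neq 0$ enters: it says exactly that $F^n$ is a local twist along $\cL$ in the sense of Definition \ref{Dlocaltwist}, and since $F^n_{|\cL}\equiv \mathrm{Id}_\cL$ (the graph is $(m,n)$-periodic), Proposition \ref{Plocaltwist}(i) applies and upgrades the continuous $\gamma$ to a map as regular as $f$, in particular $C^1$. From now on $\cL$ is a $C^1$ submanifold and $T_{(q,\gamma(q))}\cL$ makes sense.

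Next I would prove the two-sided inclusion of subspaces of $\R^{2d}$. For the easy inclusion $T_{(q,\gamma(q))}\cL \subseteq \ker\big(DF^n(q,\gamma(q))-\mathbb I_{2d}\big)$: differentiate the identity $F^n(q,\gamma(q)) = (q+m,\gamma(q))$ along $\cL$. Parametrizing $\cL$ by $q\mapsto (q,\gamma(q))$, the tangent space is the image of $v\mapsto (v, D\gamma(q)v)$; applying $DF^n(q,\gamma(q))$ to $(v,D\gamma(q)v)$ and using the chain rule on the identity gives back $(v,D\gamma(q)v)$. Hence every tangent vector is fixed by $DF^n(q,\gamma(q))$, i.e.\ lies in $\ker\big(DF^n-\mathbb I_{2d}\big)$. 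This inclusion uses only that $\cL$ is an invariant (here: pointwise-fixed, after accounting for the $+m$ translation) $C^1$ graph.

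For the reverse inclusion I would use a dimension count together with the characteristic-polynomial hypothesis. The subspace $T_{(q,\gamma(q))}\cL$ has dimension $d$. On the other hand $\ker\big(DF^n(q,\gamma(q))-\mathbb I_{2d}\big)$ is the eigenspace for the eigenvalue $1$; since $\det\big(DF^n(q,\gamma(q))-X\mathbb I_{2d}\big)=(X-1)^{2d}$, the generalized eigenspace for $1$ is all of $\R^{2d}$, but this does not by itself bound the dimension of the actual eigenspace. The key point is the twist condition: for a symplectic twist map (and its iterates, once we know $F^n$ is a local twist at the point, which we have from the first step), the Lagrangian plane $\{0\}\times\R^d$ is transverse to $\ker\big(DF^n-\mathbb I_{2d}\big)$ — indeed, if $(0,w)$ were fixed by $DF^n(q,\gamma(q))$, then its first component, namely $\partial_p(\pi_1\circ F^n)(q,\gamma(q))\,w$, would be $0$, forcing $w=0$ by the non-degeneracy hypothesis. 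Therefore $\ker\big(DF^n-\mathbb I_{2d}\big)$ has dimension at most $d$. Combined with the first inclusion $T_{(q,\gamma(q))}\cL\subseteq \ker\big(DF^n-\mathbb I_{2d}\big)$ and $\dim T_{(q,\gamma(q))}\cL = d$, we get equality.

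I expect the main subtlety to be making the transversality/dimension argument clean: one must be careful that the hypothesis $\det\big(\partial_p(\pi_1\circ F^n)(q,\gamma(q))\big)\neq 0$ is precisely what both gives $C^1$ regularity (via Proposition \ref{Plocaltwist}) and forces the $1$-eigenspace to meet $\{0\}\times\R^d$ only at the origin, hence to have dimension $\le d$; the characteristic polynomial hypothesis $(X-1)^{2d}$ is then not even strictly needed for this Lemma (it will be used in Proposition \ref{P(m,n)Lagr} to control the nilpotent part), but it is harmless to keep. Once equality of the two $d$-dimensional subspaces is in hand, the Lemma is proved.
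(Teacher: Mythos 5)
Your proposal is correct and follows essentially the same route as the paper: $C^1$ regularity via Proposition \ref{Plocaltwist}, the inclusion $T_{(q,\gamma(q))}\cL\subseteq\ker(DF^n-{\mathbb I}_{2d})$ by differentiating the periodicity identity, and the dimension bound $\dim\ker(DF^n-{\mathbb I}_{2d})\le d$ from the fact that the non-degeneracy of $\partial_p(\pi_1\circ F^n)$ forces the $1$-eigenspace to meet the vertical $\{0\}\times\R^d$ only at the origin (the paper phrases this last step as a contradiction with $\dim\ge d+1$, which is the same argument). Your side remark that the characteristic-polynomial hypothesis is not actually used in this Lemma also matches the paper, where that hypothesis only enters later in the proof of Proposition \ref{P(m,n)Lagr}.
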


\begin{proof}[Proof of Lemma \ref{Leigenspace}]
Let $x:=(q,\gamma(q)) \in \cL$. Since $\det\Big(\partial_p(\pi_1\circ F^n)(q, \gamma(q))\Big)\neq0$ for every $q\in \R^d$, 
by applying Lemma \ref{lemreg}, we deduce that $\cL$ is at least $C^1$.
Since $\cL$ is $(m, n)$-periodic, then $DF^n_{|T_x\cL}={\rm Id}_{T_x\cL}$ and therefore $T_x\cL\subseteq \ker (DF^n(x)-{\mathbb I}_{2d})$.
Moreover, observing that $\dim T_x\cL=d$, in order to prove the claim, it is enough to show  that $\dim \ker (DF^n-{\mathbb I}_{2d})\leq d$. \\
Assume by contradiction that $\dim \ker (DF^n(x)-{\mathbb I}_{2d})\geq d+1$. Denote $V(x):=\ker D\pi_1(x)$, using the fact that $\dim V(x)=d$, we can deduce that the intersection $V(x)\cap \ker (DF^n(x)-{\mathbb I}_{2d})$ contains at least one non-zero vector that we denote by $\begin{pmatrix} 0\\ v\end{pmatrix}$. Since $\det\Big(\partial_p(\pi_1\circ F^n)(x)\Big)\neq0$, we have 
$$d\pi_1\circ DF^n(x)\begin{pmatrix} 0\\ v\end{pmatrix}=\partial_p(\pi_1\circ F^n)(x)v\neq0$$
and therefore $DF^n(x)\begin{pmatrix} 0\\ v\end{pmatrix}\neq\begin{pmatrix} 0\\ v\end{pmatrix}$, which contradicts the fact that $$\begin{pmatrix} 0\\ v\end{pmatrix}\in \ker(DF^n(x)-{\mathbb I}_{2d}).$$
\end{proof}

\medskip
Let us now prove Proposition \ref{P(m,n)Lagr}.\\

\begin{proof}[Proof of Proposition \ref{P(m,n)Lagr}]
{The fact that $\cL$ is $C^1$ follows from Lemma \ref{Leigenspace}. Let us now prove the equivalence between (i) and (ii). {Hereafter, we denote by $V(x):=\ker D\pi_1(x)$ the vertical space at $x\in \cL$}. }
\begin{itemize}
\item[$\bullet$] {We assume (ii), {\it i.e.,} that 
\begin{equation}\label{newcondition}
\Big({DF^n(x)- \mathbb I_{2d}}\Big)^2={\mathbb O}_{2d} \quad {\forall\; x\in \cL}
\end{equation} and we prove that $\cL$ is Lagrangian.} Equation \eqref{newcondition} implies that the minimal polynomial of $DF^n(x)$ is a divisor of $(X-1)^2$, hence  $DF^n(x)$
 has no other eigenvalue than $1$.
It follows from Lemma \ref{Leigenspace} that 
 for every $x\in \cL$, the eigenspace of $DF^n(x)$  corresponding to the eigenvalue $1$  is $T_x\cL$. Therefore $DF^n(x)$ is not diagonalizable and its minimal polynomial is $(X-1)^2$.
 Choosing a basis (not necessarily symplectic) in which the first $d$  vectors form a basis of  $T_x\cL$, the matrix of $DF^n(x)$ becomes 
$${\mathbb M}:=\begin{pmatrix}{\mathbb I}_d & {\mathbb D}(x)\\ {\mathbb O}_d & {\mathbb I}_d
\end{pmatrix}$$
where ${\mathbb O}_d$ is the $d$-dimensional zero square matrix,  ${\mathbb I}_d$ the d-dimensional identity matrix and
{${\mathbb D}(x)$} is a $d\times d$ invertible matrix. Since the vertical  $V(x)$ is transverse to $T_x\cL$, there exists a $d$-dimensional square matrix ${\mathbb H}(x)$ such that in these coordinates
$$
V(x) =  \{(\mathbb H(x)\, y,y):\; y\in\R^d\}.$$ 
The image by ${\mathbb M}^k$ of a vector $\begin{pmatrix} \mathbb H(x)\, y\\ y\end{pmatrix}$ of $V(x)$ is then 
$\begin{pmatrix} \mathbb H(x)\,y+k \, \mathbb D(x)\, y\\ y\end{pmatrix}$.   For $k$ large enough, ${\mathbb D}(x)+\frac{1}{k}{\mathbb H}(x)$ is invertible and in coordinates 
$$DF^{nk}V(x)=\{ (x, \frac{1}{k}\left( {\mathbb D}+\frac{1}{k}\mathbb{H}\right)^{-1}x); x\in\R^d\}$$
converges as 
$k$ tends to $\infty$ to $T_x\cL$ in the Grasmannian of $d$-dimensional subspaces of {$T_x(\R^d\times \R^d)$}. \\
Being the limit of a sequence of Lagrangian subspaces, then $T_x\cL$ is Lagrangian. 
\item[$\bullet$]
{Now, we assume that $\cL$ is Lagrangian. Let us fix {$x \in \cL$}. Then we have $V(x)\oplus T_x\cL=\R^d\times \R^d$. Then for every $(v, w)\in V(x)\oplus T_x\cL$, a result of   Lemma \ref{Leigenspace} is that $DF^n(x) w = w$.
We deduce
\begin{eqnarray*}
\omega (w, (DF^n(x)-{\mathbb I}_{2d})v) &=& \omega (w, DF^n(x)v) - \omega (w, v)\\
&=&  \omega (DF^n(x)w, DF^n(x)v)-\omega(w, v)=0,
\end{eqnarray*}
where in the last equality we have used that $DF^n(x)$ is a symplectic matrix.
Hence $(DF^n(x)-{\mathbb I}_{2d})v$ is $\omega$-orthogonal to the Lagrangian space $T_x\cL$. This implies that $(DF^n(x)-{\mathbb I}_{2d})v\in T_x\cL=\ker(DF^n(x)-{\mathbb I}_{2d})$ and therefore   $V\subseteq \ker(DF^n(x)-{\mathbb I}_{2d})^2$. 
{Since also $\ker(DF^n(x)-{\mathbb I}_{2d}) \subseteq \ker(DF^n(x)-{\mathbb I}_{2d})^2$, then 
$$
\ker(DF^n(x)-{\mathbb I}_{2d})^2 \supseteq V(x)\oplus T_x\cL=\R^d\times \R^d
$$
and therefore $\big(DF^n(x)-{\mathbb I}_{2d}\big)^2 = {\mathbb O}_{2d}$, as claimed.
}}
 \end{itemize}

\end{proof}



\section{Proof of Theorem \ref{teo 1}}\label{section theo 1}

Before proving Theorem \ref{teo 1} (see section \ref{sstheo1}),  we discuss several results concerning the set of $(m,n)$-periodic  or radially transformed points (see definition below, Remark \ref{radiallytrans}) of a family of symplectic twist maps obtained by a symplectic deformation by a potential (see Notations \ref{notadeformation}).\\

\subsection{Sets of $(m,n)$-periodic and radially transformed points of symplectic twist maps}

Let $f: \T^d \times \R^d \righttoleftarrow$ be symplectic twist map and
let $F: \R^d \times \R^d \righttoleftarrow$ be a lift  of $f$ and $S: \R^d\times \R^d \longrightarrow \R$ its generating function.\\
We consider the symplectic deformation of $f$ by a potential $G\in C^2(\T^d)$ (see Notations \ref{notadeformation}), given by $f_\eps:  \T^d \times \R^d \righttoleftarrow$, with $\eps\in \R$, whose generating functions are
$
S_\eps (q,Q) := S(q,Q) + \eps G(q).
$
We denote by $F_\eps$ a continuous family {of lifts of $f_\eps$}.\\

\begin{notas}
In the case in which $(f_\eps)_\eps$ satisfy the analyticity property ({i.e., when $f$ satisfies the analyticity property, see Definition \ref{Danaly}, and $G$ admits a analytic extension to $\C^d$}), it will be useful to consider the complex extension of this family of maps,  namely $(f_\eps)_{\eps\in \C}$ and $(F_\eps)_{\eps\in \C}$. 
\end{notas}

Let us introduce the following sets  for a fixed $\eps\in\K$, where $\K=\R$ or $\C$.\\
We denote by $\pi_1: \R^d\times \R^d \longrightarrow \R^d$ the projection on the $q$-component, {\it i.e.}, $\pi_1(q,p)=q$.

\begin{defn}\label{sets1}
We introduce the following sets of points and parameters for the family of maps $(F_\eps)_{\eps\in \K}$:
{\small
\begin{eqnarray*}
\cR^*_{(m, n)}(\K) &:=& \{ (\varepsilon, q, p)\in \K^{2d+1}:\; 
\pi_1\circ F^n_\eps(q, p)=q+m,\quad
\det\big(\partial_p(\pi_1\circ F_\eps^n(q, p))\big)\neq0\}\\
\cP_{(m, n)}(\K)&:=& \{ (\varepsilon, q, p)\in \K^{2d+1}:\;  F^n_\eps(q, p)=(q+m,p)\}\\
\cP^*_{(m, n)}(\K)&:=&\cP_{(m, n)}(\K)\cap \cR^*_{(m, n)}(\K).\\
\end{eqnarray*}
}
\end{defn}

\begin{rmk}\label{radiallytrans}
{\bf (i)} A point $(q, p)\in  \K^d\times\K^d$  such that $\pi_1\circ F^n_\eps(q, p)=q+m$  is said to be $(m,n)-${\it radially transformed} by $F_\eps$. Hence,  the set 
$\cR^*_{(m, n)}(\K)$ consists of the points that are $(m,n)-${\it radially transformed} by $F_\eps$ and on which the map $F^n_\eps$ satisfies a non-degeneracy condition, namely 
$\partial_p(\pi_1\circ F_\eps^n(q, p))$ is invertible.\\
 {\bf (ii)} Similarly, $\cP_{(m, n)}(\K)$ consists of the $(m,n)-$ periodic points of $F_\eps$, and $\cP^*_{(m, n)}(\K)$ of non-degenerate $(m,n)$-periodic points  (non-deneracy is meant in the same way as in item (i)).
\end{rmk}

\begin{defn}\label{sets2}
We introduce the following sets of (real) parameters for the family of maps $(F_\eps)_{\eps\in \R}$:
{\small
\begin{eqnarray*}
\cJ^*_{(m, n)}(\R) &:=& \{  \eps\in  \R:\; \exists \; \gamma_\eps: \R^d\to \R^d\, \mbox{Lipschitz and $\Z^d$-periodic, s.t.}\\
&& \qquad \qquad 
\{\eps\}\times {\rm graph}(\gamma_\eps) \subseteq\cR_{(m, n)}^{*}(\R) 
\}\\
\cI_{(m, n)}(\R) &:=& \{  \eps\in  \R:\; \exists \; \gamma_\eps: \R^d\to \R^d\,  \mbox{Lipschitz and $\Z^d$-periodic, s.t.
${\rm graph}(\gamma_\eps)$ is}\\
&& \qquad \qquad \mbox{Lagrangian, invariant by $F_\eps$, and}\;
\{\eps\}\times {\rm graph}(\gamma_\eps) \subseteq\cP_{(m, n)}(\R)
\}\\
\cI^*_{(m, n)}(\R) &:=& \{  \eps\in  \R:\; \exists \; \gamma_\eps: \R^d\to \R^d\,  \mbox{Lipschitz and $\Z^d$-periodic, s.t.
${\rm graph}(\gamma_\eps)$ is}\\
&& \qquad \qquad \mbox{Lagrangian, invariant by $F_\eps$, and}\;
\{\eps\}\times {\rm graph}(\gamma_\eps) \subseteq\cP^*_{(m, n)}(\R)
\}.\\
\end{eqnarray*}
}
\end{defn}

\begin{lemma}\label{Pdiverse}
Assume that $f$ is strongly positive.
The sets of Definitions \ref{sets1} and \ref{sets2} satisfy the following properties.
\begin{enumerate}[{\rm (i)}] 
    \item \label{Thept1}   $\cR^*_{(m, n)}(\K)$ is a $(d+1)$-dimensional  submanifold of $\K\times\K^d\times\K^d$, which is as regular as $F$ is (e.g.,  $C^k$, $k\geq 1$ or $\infty$, or analytic) and it locally coincides with the graph of a function $\Gamma_{m, n}:\cV_{(m, n)}\subset \K\times\K^d\rightarrow \K^d$ defined for $(\eps,q) $ in some open subset $\cV_{(m, n)}$ of $\K\times \K^d$.
     \item\label{Thept2}   $\cJ^*_{(m, n)}(\R)$ is an open subset of $\R$.
    \item\label{Thept3}    $\cI_{(m, n)}(\R)\equiv \cI^*_{(m, n)}(\R)$.
    \item\label{Thept4}  $\cI^*_{(m, n)}(\R)$ is closed and for every  $\eps\in \cI^*_{(m, n)}(\R)$, {$F_\eps$} has exactly one  Lagrangian $(m, n)$-completely periodic graph, denoted ${\rm graph} (\gamma_\eps)$. 
    Moreover, the map $(\eps, q)\in \cI^*_{(m, n)}(\R)\times {\R^d}\mapsto \gamma_\eps(q)$ is as regular as the map $(\eps, q, p)\mapsto F_\eps(q, p)$ (in Whitney's sense, {see \cite{Abraham-Robbin}}), and $\Z^d$-periodic in the $q$-variable.
\end{enumerate}
\end{lemma}

\begin{proof}
\begin{enumerate}[(i)]
    \item 
    The set  $$\cU_{(m, n)}(\K):=\{ (\eps, q, p)\in\K\times \K^d\times\K^d:\;   {\det\big(\partial_p(\pi_1\circ  F_\eps^n(q, p)\big)\neq 0}\}$$ is an  open subset of  $\K\times \K^d\times\K^d$.   At every point $(\eps, q, p)\in \cU_{(m, n)}(\K) $, the map $(\eps, q, p)\in\K\times\K^d\times\K^d\mapsto \pi_1\circ F^n_\eps(q,p)-q-m $ is a submersion, {whose partial derivative with respect to $p$ is invertible}. 
    This implies, by the implicit function theorem, that $$\cR^*_{(m, n)}(\K)=\{ (\varepsilon, q, p)\in \cU_{(m, n)}(\K):\, \pi_1\circ F^n_\eps(q,p)=q+m \}$$ is a $(d+1)$-dimensional  submanifold of $\K\times \K^d\times\K^d$ that  locally coincides with the graph of a function
    $$\Gamma_{(m, n)}: {(\eps, q)\in\cV_{(m, n)}\subset\K\times\K^d\mapsto p=\Gamma_{(m, n)}(\eps, q)\in\K^d}$$ defined on an open subset $\cV_{(m, n)}$ of $\K\times \K^d$, which is as regular as 
{the map     $(\eps, q, p)\mapsto F_\eps(q, p)$ is.}
    Observe that $\Gamma_{(m, n)}$ is  $\Z^d$-periodic in the variable $q$.\\

    \item We assume that $\tilde{\eps}\in\cJ^*_{(m, n)}(\R)$. Then, by definition, there exists a  {Lipschitz} $\Z^d$-periodic function $\gamma_{\tilde{\eps}}: \R^d\rightarrow \R^d$ such that for all $q\in \R^d$
    $$ \pi_1\circ F_{\tilde{\eps}}^n(q, \gamma_{\tilde \eps}(q))=q+m \quad {\rm and} \quad \det\Big(\partial_p\big( \pi_1\circ F_{\tilde{\eps}}(q, \gamma_{\tilde{\eps}}(q))\big)\Big)\neq0.$$
    This implies that $\{\tilde{\eps}\}\times {\rm graph}(\gamma_{\tilde{\eps}})$ is a subset of $\cR^*_{(m, n)}(\R)$. We know  that $\cR^*_{(m, n)}(\R)$ is a $(d+1)$-dimensional submanifold which is locally a graph of a function ${\Gamma}_{(m, n)}$ and $\Z^d$-periodic in the variable $q\in\R^d$ by item \ref{Thept1}. Hence,  there exists  $0<\sigma<\tilde{\eps}$ such that 
    $\{\tilde{\eps}\}\times {\rm graph}(\gamma_{\tilde{\eps}})$ is contained in  ${\Gamma}_{(m, n)}\big( (\tilde{\eps}-\sigma, \tilde{\eps}+\sigma)\times \R^d \big)$. 
     Then, for every $\eps\in (\tilde{\eps}-\sigma, \tilde{\eps}+\sigma)$, the function  $\gamma_\eps:=\Gamma_{(m, n)}(\eps, .)$ is {Lipschitz}, $\Z^d$-periodic  and such that for every $q\in\R^d$
   $$\pi_1\circ F^n_\eps(q,\gamma_\eps(q))=q+m
   \quad{\rm and}\quad \det\big(\partial_p(\pi_1\circ F_\eps^n(q, \gamma_\eps(q))\big)\neq 0.$$
 \noindent 
 This exactly means that $(\tilde{\eps}-\sigma, \tilde{\eps}+\sigma)\subset \cJ^*_{(m, n)}(\R)$, proving that $\cJ^*_{(m, n)}(\R)$ is open.\\

    \item By definition $\cI^*_{(m, n)}(\R)\subseteq\cI_{(m, n)}(\R)$. Let us now consider $\tilde{\eps}\in \cI_{(m, n)}(\R)$. Then $F_{\tilde{\eps}}$ has an $(m,n)$-periodic {Lipschitz} Lagrangian invariant  graph, that we denote by $\cL_{\tilde{\eps}}:={\rm graph}(\gamma_{\tilde{\eps}})$. Proposition \ref{p(m,n)minimizing} and Remark \ref{twist} (ii)
imply that 
    $$ \det\Big(\partial_p\big( \pi_1\circ F_{\tilde{\eps}}^n)(q, \gamma_{\tilde{\eps}}(q)\big)\Big)\neq0 \qquad \forall q\in \R^d,$$
since there are no conjugate points for the  orbits on $\cL_{\tilde{\eps}}$.
This means that ${\tilde{\eps}}\in \cI^*_{(m, n)}(\R)$.\\

\item 
 Let us consider  a sequence $(\eps_k)_{k\geq 1}$ in ${\mathcal \cI^*_{(m,n)}(\R)} = {\mathcal \cI_{(m,n)}(\R)}$ that converges to $\eps_\infty$.
 All the $\eps_k$ are contained in a compact subset   of $\R$ and bounded by some constant $K>0$. \\
{Since $f$ is  positive, 
$S(q,Q)$  satisfies the following superlinearity condition (see Remark \ref{twist} (i) and  \cite[Lemma 27.2]{Gole}): 
 $$
 \lim_{\| Q-q\|\rightarrow +\infty} \frac{S(q, Q)}{\| Q-q\|}=+\infty.
 $$
and therefore there is a compact subset $\cK\subset (\R^{2d})^n$ such that
{\small $$
\sum_{j=0}^{n-1} S(q_j,q_{j+1}) > 2K n \|G\|_\infty + {\max_{q\in \R^d}\sum_{j=0}^{n-1} S\left(q+j\frac{m}{n}, q +(j+1) \frac{m}{n}\right)} \qquad \forall (\Delta q_0, \dots,  \Delta q_{n-1})\notin \cK,
 $$}
where {$q_0:=q$ and $q_{j+1} := q_j+\Delta q_j$} for $j=0, \ldots, n-1$.\\
 We deduce that for $k\in\N$  and $(\Delta q_1, \dots, \Delta q_n)\notin \cK$ we have
{\small
\begin{eqnarray*}
\sum_{j=0}^{n-1} \left( S(q_j,q_{j+1})+\eps_k G(q_j)\right)
&>&  2Kn\,\| G\|_\infty +   {\max_{\|q\|_\infty \leq 1}\sum_{j=0}^{n-1} S\left(q+j\frac{m}{n}, q +(j+1) \frac{m}{n}\right)} +\eps_k \sum_{j=1}^n  G(q_j) \\
&\geq&  {\max_{q\in \R^d}\sum_{j=0}^{n-1} S\left(q+j\frac{m}{n}, q +(j+1) \frac{m}{n}\right)} + n \big(2K - \eps_k)\,\| G\|_\infty\\
&\geq&  {\max_{q\in \R^d}\sum_{j=0}^{n-1} S\left(q+j\frac{m}{n}, q +(j+1) \frac{m}{n}\right)} + n K \| G\|_\infty\\
&\geq & {\max_{q\in \R^d} \left( \sum_{j=0}^{n-1} \left(S\left(q+j\frac{m}{n}, q +(j+1) \frac{m}{n}\right) +\eps_k G\big(q+j\frac{m}{ n}\big)\right)\right).}
 \end{eqnarray*}
 }}
Hence every minimizing
$(m,n)$-periodic orbit  {starting at $(q,p)\in\R^d \times \R^d$, is such that $p$ belongs to a fixed compact subset 
 $ {\Pi^{(m,n)}_{K}}\subset \R^d$, independent of $\eps \in [-K,K]$.
Using Proposition \ref{p(m,n)minimizing}, we deduce that 
$(m, n)$-periodic graphs of $F_{\eps}$, with $\eps\in [-K,K]$, are contained in $\R^d\times {\Pi^{(m,n)}_{K}}$.}

Proposition \ref{Plocaltwist} implies that these graphs are $C^1$ and, {using that they are contained in $\R^d\times {\Pi^{(m,n)}_{K}}$, estimate \eqref{ELipschinequ} from Proposition \ref{lemmac1} implies that they are uniformly Lipschitz.} \\
By Arzelà-Ascoli theorem {(equiboundedness and equicontinuity follow from what remarked above)}, 
there exists a subsequence of $(\eps_{k_j})_j$ such that the $(m, n)$-periodic invariant graphs {of $F_{\eps_{k_j}}$} converge to a Lipschitz graph, which  is then an invariant $(m, n)$-periodic {graph of $F_{\eps_\infty}$} and Lagrangian,  being a uniform limit of {Lipschitz} Lagrangian graphs; observe that this limit graph is also $\Z^d$-periodic, being the uniform limit of $\Z^d$-periodic graphs. 
Again, from Proposition \ref{p(m,n)minimizing} and Remark \ref{twist} (ii) we deduce that all orbits starting at points of ${\rm graph}(\gamma_{\eps_{\infty}})$ are action-minimizing and 
$$ \det\Big(\partial_p\big(\pi_1\circ F^n(q, \gamma_{\eps_\infty}(q))\big)\Big)\neq 0 \qquad \forall q\in\R^d.$$
Hence, $\{\eps_\infty\}\times\{ (q, \gamma_{\eps_{\infty}}(q)):\, q\in\R^d\}  \subseteq \cR^*_{(m, n)}(\R)$. 
{We conclude that ${\eps_\infty} \in \cI^*_{(m,n)}(\R)$, which implies that $\cI^*_{(m,n)}(\R)$ is closed.}

{
Finally, we deduce from item \ref{Thept1} that the map $(\eps, q)\in \cI^*_{(m, n)}(\R)\times {\R^d}\mapsto \gamma_\eps(q)$ is as regular as  $(\eps, q, p)\mapsto F_\eps(q, p)$ is,  {in  Whitney's sense (it coincides, in fact, with the restriction of the map $\Gamma_{m,n}$ on this set)}, {and $\Z^d$-periodic in the $q$ variable}.  
Moreover, it follows from Proposition \ref{uniqtorus} (ii) that for every $\eps\in \cI^*_{(m, n)}(\R)$, {$F_\eps$} has exactly one  invariant Lagrangian $(m, n)$ periodic graph, that must then coincide with ${\rm graph} (\gamma_\eps)$. \\
}

    \end{enumerate}
\end{proof}

\subsection{Proof of Theorem \ref{teo 1}}\label{sstheo1}
This section is organized as follows:

\begin{itemize}
\item[$\bullet$] We shall first prove that the set of $\eps\in \R$ for which $F_\eps$ admits a Lipschitz Lagrangian $(m,n)$-periodic graph is either the {\it whole $\R$} or it has {\it empty interior} (see Lemma \ref{stoca}).
A key tool in the proof is provided by the identity theorem for holomorphic functions.\\
\item[$\bullet$]  We then improve the previous result and prove Theorem \ref{teo 1}. Namely, we show  that the when the  set of $\eps\in \R$ for which $F_\eps$ admits a Lipschitz Lagrangian $(m,n)$-periodic graph is not the whole $\R$, then  it is more than with empty interior: it {\it consists of isolated points}. Moreover, we show that with the additional assumption that $f$ has bounded rate, then this set {\it is at most finite}. \\
The proof strongly relies on the fact that  non-identically zero $1$-dimensional holomorphic functions on a connected set, cannot vanish on sets with an accumulation point. \\
Moreover, under the assumption that $f$ has bounded rate, we show this set is bounded (see Corollary \ref{Pnoinvariantgraph}), from which one deduces that if the potential is not constant (in which case the corresponding deformation is trivial, hence the set is either the empty set or the whole $\R$),  then this set must be at most finite.\\
\end{itemize}

\medskip

Let us start with the following Lemma.\\

\begin{lemma}\label{stoca}
Under the hypotheses (i)-(iii) of Theorem \ref{teo 1}, 
the set 
\[
 \set{\eps\in \R: \, F_\eps \text{ has a {Lipschitz} Lagrangian $(m,n)$-periodic  graph}}
\]
{is the whole $\R$ or it has} empty interior.
\end{lemma}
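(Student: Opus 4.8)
The plan is to prove the dichotomy in contrapositive form: writing $J$ for the set in the statement, I assume $J$ has nonempty interior and deduce $J=\R$. The first step is to replace $J$ by something already understood. By Proposition \ref{invarianttori} (see Remark \ref{G20}(iii)) a Lipschitz Lagrangian $(m,n)$-periodic graph of the positive twist map $F_\eps$ is automatically completely periodic, so $J=\cI_{(m,n)}(\R)=\cI^*_{(m,n)}(\R)$ by item \ref{Thept3} of Lemma \ref{Pdiverse}; in particular $J$ is closed by item \ref{Thept4}, and for $\eps\in J$ the Lagrangian $(m,n)$-completely periodic graph ${\rm graph}(\gamma_\eps)$ is unique, with $(\eps,q)\mapsto\gamma_\eps(q)$ real-analytic (Whitney, hence genuinely analytic on ${\rm int}\,J$) and $\Z^d$-periodic in $q$ — this using hypotheses (i)–(iii), which make $(\eps,q,p)\mapsto F_\eps(q,p)$ analytic and hence $\cR^*_{(m,n)}(\C)$ an analytic submanifold, locally the graph of the analytic map $\Gamma_{(m,n)}$ (item \ref{Thept1}).

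The heart of the proof is an analytic continuation in the parameter. Fixing $\eps_*\in{\rm int}\,J$, I would let $\Omega$ be the largest open interval $\ni\eps_*$ over which the germ $\eps\mapsto\gamma_\eps$ extends as a real-analytic, $\Z^d$-periodic (in $q$) family of \emph{radial} graphs, i.e.\ solutions of $\pi_1\circ F^n_\eps(q,\gamma_\eps(q))=q+m$. This is well posed: since $\cR^*_{(m,n)}$ is locally the graph of $\Gamma_{(m,n)}$, such a continuation is locally unique, so any two admissible intervals carry families agreeing on their overlap (they agree near $\eps_*$, and agreement is open and closed along the connected overlap), and they glue. The key point is then that the \emph{remaining} properties come along for free by the identity principle: for each fixed $q$, the maps $\eps\mapsto\pi_2\circ F^n_\eps(q,\gamma_\eps(q))-\gamma_\eps(q)$ and $\eps\mapsto\partial_{q_j}\gamma_{\eps,i}(q)-\partial_{q_i}\gamma_{\eps,j}(q)$ are analytic on the interval $\Omega$ and vanish near $\eps_*$ (where $\gamma_\eps$ is a Lagrangian completely periodic graph), hence vanish throughout $\Omega$. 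The first says ${\rm graph}(\gamma_\eps)$ is $(m,n)$-periodic; the second says the $1$-form $\sum_i\gamma_{\eps,i}\,dq_i$ is closed, hence exact on $\R^d$ and, being $\Z^d$-periodic, Lagrangian; real-analyticity gives the Lipschitz regularity. Thus $\Omega\subseteq J$.

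It remains to see $\Omega=\R$, which (since $\R$ is connected and $\Omega$ is open by construction) amounts to closedness. Suppose $\eps_1:=\sup\Omega<\infty$, the other endpoint being symmetric. As $\Omega\subseteq J$ and $J$ is closed, $\eps_1\in J$; by Proposition \ref{p(m,n)minimizing} and Remark \ref{twist} the orbits on ${\rm graph}(\gamma_{\eps_1})$ are minimizing and have no conjugate point, so $\det(\partial_p(\pi_1\circ F^n_{\eps_1})(q,\gamma_{\eps_1}(q)))\neq0$ for all $q$, i.e.\ $(\eps_1,q,\gamma_{\eps_1}(q))\in\cR^*_{(m,n)}(\R)$. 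Applying the implicit function theorem on $\cR^*_{(m,n)}$ near these points (using compactness of $\T^d$) yields a real-analytic $\Z^d$-periodic family of radial graphs for $\eps$ near $\eps_1$; for $\eps<\eps_1$ close enough it coincides with the $\Omega$-family, since both are $\Z^d$-periodic solutions of the radial equation close to $\gamma_{\eps_1}$, which has a unique such solution there — where the convergence $\gamma_\eps\to\gamma_{\eps_1}$ as $\eps\uparrow\eps_1$ along $\Omega$ is exactly the compactness argument already run for item \ref{Thept4} of Lemma \ref{Pdiverse} (superlinearity of $S$ confines the minimizing periodic graphs for $\eps$ in a compact set to a fixed compact region, and estimate \eqref{ELipschinequ} of Lemma \ref{lemmac1} plus Ascoli–Arzelà, together with uniqueness, produce the limit). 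Hence the $\Omega$-family extends past $\eps_1$, contradicting maximality; so $\sup\Omega=+\infty$, likewise $\inf\Omega=-\infty$, $\Omega=\R$, and $J\supseteq\Omega=\R$.

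The step I expect to be the main obstacle — or at least the one demanding the most care — is controlling the continuation near $\partial\Omega$: one must simultaneously rule out blow-up of $\gamma_\eps$ and degeneracy of the twist determinant $\det(\partial_p(\pi_1\circ F^n_\eps))$ along the graph. Both are consequences of the positivity hypotheses through the fact, already established for Lemma \ref{Pdiverse} via Proposition \ref{Plocaltwist}/\ref{p(m,n)minimizing} and the superlinearity and uniform Lipschitz bounds, that orbits lying on a Lagrangian periodic graph of a positive twist map are action-minimizing and hence conjugate-point-free; once this is in hand, the rest is soft — the identity principle for propagation, and a clopen argument on the connected line for the global conclusion.
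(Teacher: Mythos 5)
Your proposal is correct, but it takes a genuinely different route from the paper's. The paper complexifies: it works in the complex $(d+1)$-dimensional manifold $\cR^*_{(m,n)}(\C)$ and introduces the holomorphic maps $\Delta$ (failure of periodicity) and $\chi$ (characteristic polynomial together with $(DF^n_\eps-{\mathbb I}_{2d})^2$); since these vanish, resp.\ are constant, on the $(d+1)$-real-dimensional set swept out by the graphs over a nontrivial connected component $A$ of $\cI_{(m,n)}(\R)$, they vanish, resp.\ are constant, on the whole connected component $\cV$, and then the openness of $\cJ^*_{(m,n)}(\R)$ provides radial graphs over a connected interval strictly containing $A$ which, by Propositions \ref{P(m,n)Lagr} and \ref{invarianttori}, are Lagrangian, $(m,n)$-periodic and invariant, contradicting the maximality of $A$ as a connected component. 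You instead stay entirely real: a one-variable analytic continuation in $\eps$ along a maximal interval $\Omega$, propagating periodicity through $\pi_2\circ F^n_\eps(q,\gamma_\eps(q))-\gamma_\eps(q)\equiv 0$ and the Lagrangian property through closedness of $\sum_i\gamma_{\eps,i}\,dq_i$ (in place of the paper's $(DF^n_\eps-{\mathbb I}_{2d})^2={\mathbb O}_{2d}$ criterion of Proposition \ref{P(m,n)Lagr}), and you close the clopen argument at $\sup\Omega$ with exactly the Ascoli--Arzel\`a/uniqueness compactness machinery the paper uses for the closedness of $\cI^*_{(m,n)}(\R)$, plus the implicit function theorem on $\cR^*_{(m,n)}(\R)$ (which is where absence of conjugate points enters). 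The paper's complexification treats all $q$ simultaneously and yields the identities $\Delta\equiv 0$, $\chi\equiv((X-1)^{2d},{\mathbb O}_{2d})$ on the whole component $\cV$, which are then reused verbatim in the proof of Theorem \ref{teo 1}; your version is more elementary in that it needs only real-analyticity of $\Gamma_{(m,n)}$ and the one-variable identity principle rather than the holomorphic extension to $\C^d\times\C^d$, and it bypasses Proposition \ref{P(m,n)Lagr} via the closed-one-form characterization. One small point of care: in setting up $\Omega$ you justify uniqueness of the continuation by local uniqueness on $\cR^*_{(m,n)}$, which tacitly assumes the continued graphs stay in $\cR^*_{(m,n)}$ (non-degenerate twist along the graph); either build this requirement into the definition of an admissible family (your endpoint extension produces $\cR^*$-valued families anyway), or observe that real-analyticity in $\eps$ and the identity principle already force two continuations agreeing near $\eps_*$ to agree on their whole overlap, so the gluing is well posed in either reading.
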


\begin{proof}
{We will prove that, under these assumptions,  if $\mathcal I_{(m,n)}(\R)$  has non-empty interior, then it must be the whole $\R$; observe in fact that $\mathcal I_{(m,n)}(\R)$ coincides with the set in the statement of the theorem, see Proposition \ref{invarianttori}. 
}

Recall from Lemma \ref{Pdiverse}  that  $\cI_{n, m}(\R)$ is closed.
  We also denote by $F_\eps$ the analytic extension of $F_\eps$ to $\C^d\times\C^d$. 
 Assume by contradiction that $\cI_{n, m}(\R)$ has a  connected component $A$ that is not a single point.  \\

\noindent {\sc Step 1} ({\it Definition and properties of the map $\Delta$}):
 We proved in Lemma \ref{Pdiverse} item \ref{Thept1}  that 
 $\cR^*_{(m, n)}(\C)$  is a $(d+1)$-dimensional complex submanifold of $\C^{2d+1}$. Observe that as a result of {Lemma \ref{Pdiverse} items \ref{Thept3} and  \ref{Thept4}}, the following inclusion holds:
\begin{equation}
\label{da stendere}
\Gamma := \set{(\eps, q, \gamma_\eps(q))\, : \,  {\eps \in A}, \, q\in\R^d } \subseteq \cR^*_{(m, n)}(\C).
\end{equation}
Observe that $\Gamma$ is connected; we denote by $\cV$ the connected component of $\cR^*_{(m, n)}(\C)$ that contains $\Gamma$ and define  the map

\begin{eqnarray}
\label{Delta}
\Delta:\, \cV &\longrightarrow& \C^d \nonumber\\
 (\eps, q, p) &\longmapsto& \pi_2\circ F^n_\eps(q, p) - p,
\end{eqnarray}
{where $\pi_2$ denotes the projection on the $p$-component.}
The map $\Delta$ is holomorphic and vanishes on $\Gamma$.   As the real dimension of $\Gamma$ is $d+1$ and the complex dimension of $\cV$ is $d+1$, we deduce that all coefficients  in the expansion of $\Delta$ at points of $\Gamma$ are zero and thus $\Delta$ vanishes on the whole $\cV$. \\

\noindent {\sc Step 2}  ({\it Definition and properties of the map $\chi$}):
We also  define \begin{eqnarray}
\label{Polyncaract}
\chi:\, \cV &\longrightarrow& {\C_{2d}[X]\times  M_{2d}(\C)}\nonumber\\
 (\eps, q, p) &\longmapsto& {\Big(\det\big(DF^n_\eps(q, p)- X\,{\mathbb I}_{2d}\big), \big(DF^n_\varepsilon(q,p)-{\mathbb I}_{2d}\big)^{2}\Big)},
\end{eqnarray}
where the {$\C_{2d}[X]$ denotes the set of  complex   polynomials with degree at most $2d$, which is identified with  $\C^{2d+1}$, while $M_{2d}(\C)$ is the set of square $2d$-dimensional matrices} and ${\mathbb I}_{2d}$ is the $2d$-dimensional identity matrix. The map $\chi$ is holomorphic and  constant on $\Gamma$, with value { $\big((X-1)^{2d}, {\mathbb O}_{2d}\big)$}, {where ${\mathbb O}_{2d}$ is the $2d$-dimensional zero square matrix.} Indeed, for ${\eps\in A}$, the graph of $\gamma_\eps$ is {analytic and}  Lagrangian and {$F_\eps^n$ restricted to this graph coincides with the map
$(q,p) \mapsto (q+m, p)$.}
Since $F_\eps^n$ is symplectic, then at every point of ${\rm graph}(\gamma_\eps)$ all the eigenvalues of $DF_\eps^n$ must be equal to $1$. {Moreover, we deduce from  Proposition \ref{P(m,n)Lagr} and Remark \ref{twist} that $(DF_\eps^n-{\mathbb I}_{2d})^2={\mathbb O}_{2d}$.}
{Therefore, $\chi$ must be equal to} {$((X-1)^{2d}, {\mathbb O}_{2d})$} on the whole $\cV$.\\

\noindent {\sc Step 3} ({\it The connected component $A$ must be unbounded}): We now show that $A$  is unbounded, both from above and from below, hence $A\equiv \R$, thus concluding the proof.

{In fact, let us assume that $A$ has a least upper bound $\beta$ (one can argue similarly assuming that it has a greatest lower bound or substituting $G$ with $-G$)}. Since $\cI_{n, m}(\R)$ is closed, we have $\beta\in\cI_{n,m}(\R) = \cI^*_{n,m}(\R) \subseteq \cJ^*_{n,m}(\R)$.
Recall that $\cJ_{(m,n)}^*(\R)$ is open (Lemma \ref{Pdiverse} item \ref{Thept2})  and that it consists of parameters $\eps\in\R$ for which there exists a {Lipschitz} $\Z^d$-periodic {(real) graph, that we denote ${\rm graph}(\eta_\eps)$, such that $\{\eps\}\times {\rm graph}(\eta_\eps)\subset \cR^*_{(m, n)}(\R)$}. Let $I$ be the connected component of $\cJ_{(m,n)}^*(\R)$ that contains $\beta$ and, consequently, it contains an open neighborhood of $\beta$ {and hence it properly contains $A$}.

Let us then consider the connected subset of $\cR^*_{(m, n)}(\C)$ 
$$\bigcup_{\eps\in I}\{\eps\}\times {\rm graph}(\eta_\eps)\subseteq \cV$$
and observe that  $\Delta$ must vanish on it, which means that the graphs of the function $\eta_\eps$ for $\eps\in I$ are $(m, n)$-periodic. Moreover, on each of these graphs, $\chi$ is constantly equal to  {$((X-1)^{2d}, {\mathbb O}_{2d})$} . We deduce from Proposition \ref{P(m,n)Lagr} and Remark \ref{twist} that these graphs are Lagrangian, and then from Proposition \ref{invarianttori} that they are invariant. Therefore, $I\subseteq \cI_{n, m}(\R)$, which contradicts the  fact that $A$ is a connected component of  $\cI_{n, m}(\R)$.
\end{proof}

\bigskip

We can now prove Theorem \ref{teo 1}.\\

\begin{proof}[\textbf{Proof of Theorem \ref{teo 1}}]
As observed before, $\cI_{n, m}(\R)$ coincides with the set in the statement of the theorem, see Proposition \ref{invarianttori}. \\

\noindent {\sc Proof of part {\rm I}:} Let us assume that $\cI_{n, m}(\R)$ has an accumulation point $\tilde{\eps}$, which belongs to $\cI_{n, m}(\R)$ since it is a closed set (see Lemma \ref{Pdiverse} items (iii)-(iv)). \\

\noindent{\sc Step 1} ({\it Definition of the function $\widetilde \Delta$}):
Let us consider
$$\Gamma_{\tilde{\eps}} := \set{(\tilde{\eps}, q, \gamma_{\tilde{\eps}}(q))\, : \, q\in\R^d } \subset \cR^*_{(m, n)}(\C)$$
and let us denote by $\cV$ the connected component of $\cR^*_{(m, n)}(\C)$ that contains $\Gamma_{\tilde{\eps}}$. Since $F_\eps$ is $\Z^d$-periodic in $q$, so it is its holomorphic extension. Hence $\cR^*_{(m, n)}(\C)$ is invariant by the translation $(\eps, q, p)\mapsto (\eps, q+k, p)$ for every $k\in\Z^d$.
In particular, it follows from Lemma \ref{Pdiverse} item (i)  that there exists an open neighboorhood  $\cU$  of $\Gamma_{\tilde{\eps}}$ in $\cV$ that coincides with the graph of an analytic function 
\begin{eqnarray*}
\Gamma_{m,n}: \cO \subset \C\times \C^d &\longrightarrow& {\C^d}\\
(\eps, q) &\longmapsto & \Gamma_{m,n}(\eps, q)
\end{eqnarray*}
where $\cO \subset \C\times \C^d$ is a $\delta$-neighborhood of $\{\tilde{\eps}\} \times \R^d $, for some $\delta>0$, $\Gamma_{m.n}$ is $\Z^d$-periodic in $q$ and $\{\tilde{\eps}\} \times {\rm graph}(\Gamma_{m,n}(\{\tilde{\eps}\}, \cdot)) \equiv \Gamma_{\tilde{\eps}}$. 
\\

Recalling the definition of the function $\Delta$ in \eqref{Delta}, let us define
\begin{eqnarray*}
\widetilde \Delta : \cO\subset \C\times \C^d &\longrightarrow& \C^d\\ 
(\eps, q) &\longmapsto&  \Delta ({\eps, q, \Gamma_{m,n} (\eps, q)}),
\end{eqnarray*} 
which is clearly analytic in $\cO$.\\

\noindent {\sc Step 2} ({\it Identity theorem and vanishing of $\widetilde \Delta$}): Since $\tilde{\eps}$ is an accumulation point of $\cI_{n, m}(\R)$, there exists a sequence of $\{ \eps_n\}_n \subset \cI_{n, m}(\R)$ such that $\eps_n \rightarrow \tilde{\eps}$ and 
the corresponding graphs 
$$
\Gamma_{\eps_n} := \set{({\eps_n}, q, \gamma_{\eps_n}(q))\, : \, q\in\R^d } \subset \cU
$$
(here we use the  uniqueness of these graphs, see Lemma \ref{Pdiverse} (iv)).\\
Observe that for every $q\in \R^d$, the function
$\widetilde{\Delta}(., q)$ is an analytic function of one complex variable, that vanishes on the set $\{\eps_n\}_n \cup \{\tilde{\eps}\}$, which has an accumulation point; hence, it vanishes identically on $B_\delta(\tilde{\eps})\subset \C$.
Since this holds for every $q \in \R^d$, we can conclude that $\widetilde{\Delta}$ vanishes on $B_\delta(\tilde{\eps}) \times \R^d \subset \cO$; observe that this set contains subset of real dimenstion $d+1$ and therefore
$\widetilde{\Delta} \equiv 0$ on $\cO$, which implies that $\Delta\equiv 0$ on $\cU$ and hence on $\cV$. \\

\noindent {\sc Step 3} ({\it Identity theorem applied to the function $\chi$}): 
In the same way, one can show that $\chi$ (see \eqref{Polyncaract}) is identically equal to  {$((X-1)^{2d}, {\mathbb O}_{2d})$} on the whole $\cV.$\\

\noindent {\sc Step 4} ({\it Conclusion of the proof of part I}): 
Therefore,  for every $\eps \in (\tilde{\eps}-\delta, \tilde{\eps}+\delta)$ the graphs of $\Gamma_{m,n}(\eps, \cdot)$, {with $q\in \R^d$}, are $(m, n)$-periodic (as it follows from the vanishing on them of $\Delta$), Lagrangian (as it follows from the fact that
$\chi$ is constantly equal to  {$((X-1)^{2d}, {\mathbb O}_{2d})$} on them, see Proposition \ref{P(m,n)Lagr}) and invariant (as it follows from Proposition  \ref{invarianttori}).\\

Therefore $(\tilde{\eps}-\delta, \tilde{\eps}+\delta) \subset \cI_{n, m}(\R)$; it follows from Lemma \ref{stoca}  that $\cI_{n, m}(\R)\equiv \R$ and this completes the first part of the proof: the set is either $\R$ or it must consists of isolated points. \\

\noindent {\sc Proof of part {\rm II}:} If in addition $f$ has bounded rate and $G$ is not constant, then it follows from Corollary \ref{Pnoinvariantgraph} that $\cI_{n, m}(\R)$ must be bounded; therefore, we deduce that it consists of at most finitely many points (otherwise, these points would have an accumulation point, thus contradicting the property of being isolated).

\end{proof}

\bigskip

\section{Proof of Theorem \ref{teo 2}}\label{section theo 2}

The proof of Theorem \ref{teo 2}  consists of the following steps (we assume the notation and assumptions of Theorem \ref{teo 2}):

\begin{itemize}
\item[$\bullet$] Let $(m,n)\in \Z^d\times \N^*$, with $m$ and $n$ coprime. We show that the existence of infinitely many $\eps\in \R$, accumulating to $0$,  for which $F_\eps$ has an $(m,n)$-completely periodic Lagrangian graph, implies the vanishing of certain
Fourier coefficients of $G$, determined by $(m,n)$ (see Lemma \ref{lemmino base}).
\item[$\bullet$] Using assumption ({\it iv}) of Theorem \ref{teo 2}, we deduce that all but at most finitely many Fourier coefficients of $G$ must vanish (see Proposition \ref{piazzetta} and Lemma \ref{lemmasetvectors}).
\item[$\bullet$] Finally, being $G$ a trigonometric polynomial, the proof of Theorem \ref{teo 2} will follow from Theorem \ref{teo 1}. \\
\end{itemize}

Let us start with proving the following Lemma.\\

 \begin{lemma}\label{lemmino base} 
{Under the notation  and assumptions (i)-(iii) of Theorem 2.} 
 Let $(m,n)\in \Z^d\times \N^*$, with $m$ and $n$ coprime, and assume that there exist
{infinitely many values of $\eps\in \R$, accumulating to $0$,} for which
$F_\eps$ has an $(m,n)$-completely periodic Lagrangian graph.
  Then, for every $\nu \in \Z^d\setminus\{0\}$ such that $\langle \nu , \frac{m}{n} \rangle \in \Z$, we have $\widehat{G}(\nu)=0$  (where $\widehat{G}(\nu)$ denotes the 
   {$\nu$-th} Fourier coefficient of $G$).
\end{lemma}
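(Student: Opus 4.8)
The plan is to exploit the variational characterization of the orbits lying on a completely periodic graph together with the product form $S(q,Q)=h_0(Q-q)$ of the generating function of the completely integrable map $f=f_0$ (Remark on \eqref{ECI}), and then to let $\eps\to 0$ along the prescribed sequence.

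\textbf{First step: the period-$n$ orbits on the graph.} Fix $\eps\neq 0$ among the values for which $F_\eps$ admits an $(m,n)$-completely periodic Lagrangian graph $\cL_\eps=\mathrm{graph}(\gamma_\eps)$; by Proposition \ref{Plocaltwist}(iii) (together with Proposition \ref{invarianttori}) $\gamma_\eps$ is $C^1$. Put $T_\eps(q):=\pi_1\circ F_\eps(q,\gamma_\eps(q))$, a $\Z^d$-periodic lift of a diffeomorphism of $\T^d$ with $T_\eps^n(q)=q+m$; the $F_\eps$-orbit through $(q,\gamma_\eps(q))$ projects onto $(q_j)_{j\in\Z}$, $q_j:=T_\eps^j(q)$, which satisfies $q_{j+n}=q_j+m$. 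Since this orbit is action-minimizing for the twist map with generating function $S_\eps=S+\eps G$ (Proposition \ref{p(m,n)minimizing}), the configuration $(q_j)_j$ is critical for the action, i.e. $\partial_2 S(q_{j-1},q_j)+\partial_1 S(q_j,q_{j+1})+\eps\,\nabla G(q_j)=0$ for every $j$. Using $\partial_1 S(q,Q)=-\nabla h_0(Q-q)=-\partial_2 S(q,Q)$ and summing over a period $j=0,\dots,n-1$ (the terms $\nabla h_0(q_{j+1}-q_j)$ telescope, and the boundary terms cancel because $q_{j+1}-q_j$ is $n$-periodic in $j$, as $q_{j+n}=q_j+m$), one gets $\eps\sum_{j=0}^{n-1}\nabla G(q_j)=0$; hence, since $\eps\neq 0$,
\[
\sum_{j=0}^{n-1}\nabla G\big(T_\eps^j(q)\big)=0\qquad\forall\,q\in\R^d.
\]

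\textbf{Second step: passing to the limit $\eps\to 0$.} Let $\eps_k\to 0$, $\eps_k\neq 0$, be the values given by the hypothesis. By the equiboundedness and uniform Lipschitz bounds established in the proof of Lemma \ref{Pdiverse}(iv), up to a subsequence $\gamma_{\eps_k}\to\gamma_0$ uniformly on $\R^d$, and $\mathrm{graph}(\gamma_0)$ is a Lipschitz Lagrangian $(m,n)$-completely periodic graph of $f_0$. Since $f_0$ is completely integrable, its unique such graph is $\R^d\times\{p_0\}$, with $p_0$ the unique point satisfying $\nabla\ell_0(p_0)=\tfrac{m}{n}$ (Lemma \ref{Pdiverse}(iv); Proposition \ref{uniqtorus}); thus $\gamma_0\equiv p_0$. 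Since $F_{\eps_k}\to F_0$ locally uniformly and $\gamma_{\eps_k}\to p_0$ uniformly, $T_{\eps_k}\to T_0$ uniformly with $T_0(q)=\pi_1\circ F_0(q,p_0)=q+\nabla\ell_0(p_0)=q+\tfrac{m}{n}$, whence $T_{\eps_k}^j\to R^j$ uniformly for each fixed $j$, where $R(q):=q+\tfrac{m}{n}$. Passing to the limit in the displayed identity and using the continuity of $\nabla G$ yields $\sum_{j=0}^{n-1}\nabla G\big(q+j\tfrac{m}{n}\big)=0$ for all $q$, i.e. the $\Z^d$-periodic function $\Phi(q):=\sum_{j=0}^{n-1}G\big(q+j\tfrac{m}{n}\big)$ is constant. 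Finally, expanding $G=\sum_{\nu\in\Z^d}\widehat G(\nu)\,e^{2\pi\im\langle\nu,q\rangle}$ gives $\Phi(q)=\sum_{\nu\in\Z^d}\widehat G(\nu)\big(\sum_{j=0}^{n-1}e^{2\pi\im\, j\langle\nu,\,m/n\rangle}\big)e^{2\pi\im\langle\nu,q\rangle}$, and the inner geometric sum equals $n$ when $\langle\nu,\tfrac{m}{n}\rangle\in\Z$ and $0$ otherwise; since $\Phi$ is constant, this forces $\widehat G(\nu)=0$ for every $\nu\in\Z^d\setminus\{0\}$ with $\langle\nu,\tfrac{m}{n}\rangle\in\Z$.

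\textbf{On the difficulties.} The computation in the first step is routine once one has the discrete Euler--Lagrange equations and the product structure of $S$. The one genuinely delicate point is the identification of the limit of the $\gamma_{\eps_k}$ as the integrable torus $\R^d\times\{p_0\}$ — and this is exactly where the hypothesis that the $\eps$'s accumulate at $0$ enters in an essential way — but it is already contained in Lemma \ref{Pdiverse}(iv) together with the uniqueness of completely periodic Lagrangian graphs of $f_0$ (Proposition \ref{uniqtorus}), so no new argument is required.
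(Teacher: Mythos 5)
Your proof is correct, but it follows a genuinely different route from the paper's. The paper works at the level of the \emph{action}: by Proposition \ref{torusactionminimizing} the quantity $\cA^{\eps_k}_{(m,n)}(q)=\sum_j S_{\eps_k}(q_j^{\eps_k},q_{j+1}^{\eps_k})$ is constant in $q$ on each completely periodic torus; it then expands this scalar to first order in $\eps$, kills the contribution of the $O(\eps)$ displacement of the orbit by a telescoping argument based on $\partial_qS(q_j^0,q_{j+1}^0)=-\gamma_0(q_j^0)$, identifies the first-order (Melnikov) term as $\sum_{j}G(q+j\tfrac{m}{n})$, and only then uses the accumulation of the $\eps_k$ at $0$ to force that term to be constant. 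You instead work at the level of the discrete Euler--Lagrange equations (a vector identity): exploiting the translation invariance $S(q,Q)=h_0(Q-q)$ of the integrable generating function, the $\nabla h_0$ terms telescope \emph{exactly}, yielding the non-perturbative identity $\sum_{j=0}^{n-1}\nabla G(T_\eps^j(q))=0$ for every admissible $\eps\neq 0$; the limit $\eps_k\to 0$ is then used only to replace the perturbed orbit by the rigid translation $q\mapsto q+j\tfrac{m}{n}$, via the compactness and uniqueness statements (Lemma \ref{Pdiverse}(iv), Proposition \ref{uniqtorus}) that both proofs rely on. Your version buys a cleaner limit (no uniform $o(\eps)$ remainder to control, no need to estimate $q_j^\eps-q_j^0$), at the price of leaning harder on the product structure of $S$; the paper's action-based computation is the one that would survive for a non-integrable unperturbed map up to the point where the constancy of $\cA^{(0)}_{(m,n)}$ is invoked. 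One cosmetic remark: you do not need minimality to write the Euler--Lagrange equations --- any $F_\eps$-orbit satisfies them by the definition of the generating function --- and the extraction of a subsequence in your second step is harmless since the conclusion is a statement about $G$ alone.
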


{
{

\begin{proof}{The result is trivial when $G$ is constant. Hence we assume that $G$ is not constant.}\\

\noindent {\it Step 1 (Preliminaries):}
Recall Lemma \ref{Pdiverse} (and notations therein) and Proposition \ref{invarianttori}. 
{It follows that there exist
some open subset $\cV_{(m, n)}$ of $\R\times \R^d$ containing a $\delta$-neighbourhood of $\{0\}\times\R^d$, and 
 a $C^1$ function
$ (\e, q) \in \cV_{(m, n)}\subset \R\times\R^d\longmapsto \Gamma_{m, n}(\e,q) \in \R^d$, such that $\Gamma_{m, n}$ is $\Z^d$-periodic in $q$,  its graph is in $\cR^*_{(m, n)}(\R)$ and the graph   $\gamma_0:=\Gamma_{m, n}(0, \cdot)$ is  Lagrangian and $(m, n)$-completely periodic for $F$.}\\

Let us denote by $\{\eps_k\}_{k\geq 1}$  the values of $\eps$, accumulating to $0$, whose existence is assumed in the statement. 
We can assume that $\forall k,  |\eps_k|<\delta$. 
{
If we denote by $\gamma_{\eps_k}:\R^d\to\R^d$ the $\Z^d$-periodic maps whose graphs are $(m, n)$-completely periodic and Lagrangian for $F_{\eps_k}$, then -- following the same argument as in the proof of Lemma \ref{Pdiverse} item (iv) --
we can verify the hypotheses to apply Arzel\`a-Ascoli theorem and deduce that,  as $k$ tends to $\infty$, $\gamma_{\eps_k}$ tends to $\gamma_0$ uniformly in $q$ (up to extracting a subsequence). Hence, for sufficiently large $k$, $\gamma_{\eps_k}=\Gamma_{m, n}(\eps_k, \cdot)$.}\\

\noindent {\it Step 2 (Computing the action and its Taylor expansion in $\eps$):}
For $\eps\in (-\delta, \delta)$ and $q\in \R^d$, we denote by $\{q_j^{\eps}\}_{j\in \N}$ the   projection of the orbit of $F_{\eps}$   starting at $q^{\eps}_0=q$ and lying on the graph of $\Gamma_{m, n}(\eps, \cdot)$. Hence, for $k\geq 1$ and $q\in \R^d$,  $\{q_j^{\eps_k}\}_{j\in \N}$  is the   orbit of $F_{\eps_k}$ of rotation vector $\frac{m}{n}$, starting at $q^{\eps_k}_0=q$ and lying on the $(m,n)$- completely periodic invariant Lagrangian graph corresponding to $\gamma_{\eps_k}$.
The  Lagrangian action of $\{q_j^{\eps}\}_{0\leq j\leq n}$ is given by 
\begin{equation}
\label{azione}
{ {\mathcal A}_{(m.n)}^{\eps}(q)} {:=} \sum_{j=0}^{n-1}S_{\eps}({q_{j}^{\eps}, q_{j+1}^{\eps}}) = \sum_{j=0}^{n-1} \left(S(q_{j}^{\eps}, q_{j+1}^{\eps}) + {\eps} G(q_j^{\eps})\right).
\end{equation}
 Since the Lagrangian graph $\gamma_{\eps_k}$ is $(m,n)$-completely periodic,  it follows that  ${\mathcal A}_{(m,n)}^{\eps_k}(q)$ must be constant as a function of $q$ (see Proposition \ref{torusactionminimizing}).
 
 By following a perturbative approach, we are going to get information {on its first-order expansion in power of $\eps$, namely\footnote{In some literature, { ${\mathcal A}^{\eps}_{(m,n)}$ is  referred to
 as the \co{subharmonic potential}, 
 while  its first order term as its (\co{subharmonic}) {\it Melnikov potential} (see, for instance, \cite[p. 1882]{Ramirez-Ros-Pinto:2013}).}}}  
\begin{equation} \label{espansione}
 {  {\mathcal A}^{\eps}_{(m,n)}(q) = {\mathcal A}^{(0)}_{(m,n)}(q) + {\eps} {\mathcal A}^{(1)}_{(m,n)}(q)} + o({\eps}),
\end{equation} 
with the remainder $o(\eps)$ uniform in $q$ (in fact, using $\Z^d$-periodicity, $q$ can be assumed to vary in a compact subset).

\medskip

{
First of all, observe that   we have that   $q_{j}^{\eps} = q_j^0 + O({\eps})$ for every $j=0, \ldots, n$, where $q^0_j := q + j\frac{m}{n}$ and 
the estimate $\| q_{j}^{\eps} - q_j^0\| = O({\eps})$ is uniform in $q$.
This follows from the fact that 
the map 
$$ ({\eps}, q)  \longmapsto q_j^{\eps}=\pi_1\circ F^j(q, \Gamma_{m,n}(\eps, q))$$
 is $C^1$  and $\Z^d$ periodic in $q$.\\

}

Now, expanding ${\mathcal A}^{\eps}_{(m,n)}$ {with respect to} $\eps $ we  get:
\begin{equation}
\label{rem}
\begin{aligned}
 {\mathcal A}^{\eps}_{(m,n)} (q)  &= \sum_{j=0}^{n-1} S(q_{j}^{\eps}, q_{j+1}^{\eps}) + {\eps} G(q_j^{\eps}) \\
 &= \sum_{j=0}^{n-1} S(q^0_{j}, q_{j+1}^0) +  {\eps} \sum_{j=0}^{n-1}  \left \langle (q_{j}^{\eps}-q_{j}^0, q_{j+1}^{\eps}- q_{j+1}^0) ,\, \nabla S(q_{j}^0, q_{j+1}^0 ) \right \rangle \\
 & \qquad +   {\eps} \sum_{j=0}^{n-1}  G(q_j^0) + o({\eps}),
\end{aligned}
\end{equation}
with the remainder $o({\eps})$ uniform in $q$.

Let us now observe that 
\begin{eqnarray*}
&& \sum_{j=0}^{n-1}  \left \langle (q_{j}^{\eps}-q_{j}^0, q_{j+1}^{\eps}- q_{j+1}^0) ,\, \nabla S(q_{j}^0, q_{j+1}^0) \right \rangle  \\
&& \quad = \quad
\sum_{j=0}^{n-1}  (q_{j}^{\eps}-q_{j}^0) \partial_q S(q_{j}^0, q_{j+1}^0)  +  (q_{j+1}^{\eps}- q_{j+1}^0) \partial_Q S(q_{j}^0, q_{j+1}^0) \\
&& \quad = \quad
\sum_{j=0}^{n-1}  - (q_{j}^{\eps}-q_{j}^0) \gamma_0(q_j^0)  +  (q_{j+1}^{\eps}- q_{j+1}^0) \gamma_0(q_{j+1}^0) \\
&& \quad = \quad
- (q_{0}^{\eps}- q_{0}^0) \gamma_0(q_{0}^0) + (q_{n}^{\eps}- q_{n}^0) \gamma_0(q_{n}^0) =0,
\end{eqnarray*}
where in the second-last equality we have used that it is a telescopic sum, while in the last equality that
$ q_{0}^{\eps} = q_{0}^0 =q$ and $ q_{n}^{\eps} = q_{n}^0 =q+m$.

{Hence, we get}
\begin{equation}\label{L finale}
{\mathcal A}^{\eps}_{(m,n)} (q)  =   \sum_{j=0}^{n-1} S(q^0_{j}, q_{j+1}^0) +  {\eps} \sum_{j=0}^{n-1}  G(q_j^\0) + o({\eps}).
\end{equation}

\noindent {\it Step 3 (Annihilation of certain Fourier coefficients of $G$):}
By identifying terms in \eqref{L finale} with those in equation \eqref{espansione} we conclude:
{
\begin{eqnarray*}
{\mathcal A}^{(0)}_{(m,n)}(q) &:=& \sum_{j=0}^{n-1} S(q^0_{j}, q_{j+1}^0) = \sum_{j=0}^{n-1} S(q+ j\frac{m}{n}, q + (j+1)\frac{m}{n})\\
&=& \sum_{j=0}^{n-1} h(\frac{m}{n}) = n h(\frac{m}{n})
\end{eqnarray*}
and
$$
{\mathcal A}^{(1)}_{(m,n)}(q):=\sum_{j=0}^{n-1}G(q+j\frac{m}{n}).$$
}

{Since  ${\mathcal A}^{(0)}_{(m,n)} (q)$ is constant,  we conclude that in order to have ${\mathcal A}^{\eps_k}_{(m,n)}(q)$  constant for $k\geq 1$ (as it follows from Proposition \ref{torusactionminimizing}), we necessarily need 
$$ 
{\mathcal A}^{(1)}_{(m,n)}(q)= \sum_{j=0}^{n-1}G(q+ j\frac{m}{n}) \equiv {\rm const.}$$
}
In particular, let $\nu \in \Z^d\setminus\{0\}$ such that $\nu\cdot \frac{m}{n} \in \Z$ and multiply the above relation by $e^{-2\pi i \langle \nu, q\rangle}$; integrating and changing variables, we get:
{
\begin{equation}
\label{coez}
\begin{aligned}
0 &= \int_{\T^d} \sum_{j=0}^{n-1} G(q + j\frac{m}{n}) {e^{ -  2\pi  i \, \langle \nu , q \rangle}}\,dq = 
\sum_{j=0}^{n-1}  \int_{\T^d}  G(q + j\frac{m}{n}) {e^{ - 2\pi i \, \langle \nu , q \rangle}}\,dq  \\
&= \sum_{j=0}^{n-1} \int_{\T^d} G(u) {e^{-  2\pi  i\, \langle \nu, u - j\frac{m}{n}\rangle}}\,du 
=  \sum_{j=0}^{n-1} \int_{\T^d} G(u) e^{-  2\pi i \, \langle  \nu, u \rangle }\,du 
= n\, \widehat{G}(\nu),
\end{aligned}
\end{equation}
}
where $\widehat{G}(\nu)$ denotes  the $\nu$-th Fourier's coefficient of the function $G$. 

\end{proof}
}}

\medskip

We can then prove the following.

\begin{prop} \label{piazzetta} 
Under the assumptions of Theorem \ref{teo 2}, it follows that 
$G$ must be trigonometric polynomial. 
\end{prop}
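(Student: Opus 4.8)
The plan is to combine Lemma \ref{lemmino base} with the arithmetic Lemma \ref{lemmasetvectors} to kill all but finitely many Fourier coefficients of $G$. First I would set up the bookkeeping: by assumption \ref{weak integr} of Theorem \ref{teo 2} there is a basis $q_1,\ldots,q_d$ of $\Q^d$ and open intervals $I_j=(a_j,b_j)$ with $0<a_j<b_j$ such that for every $\tfrac{m}{n}\in\bigcup_{j=1}^d I_jq_j\cap\Q^d$ the map $F_\eps$ admits a Lipschitz Lagrangian $(m,n)$-periodic graph for infinitely many $\eps\in\R$ accumulating to $0$. Recall (Remark \ref{G20}(iii) / Proposition \ref{invarianttori}) that for positive symplectic twist maps a Lipschitz Lagrangian $(m,n)$-periodic graph is automatically $(m,n)$-completely periodic, so Lemma \ref{lemmino base} applies verbatim to each such rotation vector.

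Next I would extract the vanishing of Fourier coefficients. Fix $j\in\{1,\ldots,d\}$ and a rational $\lambda\in(a_j,b_j)\cap\Q$; then $\lambda q_j\in\Q^d$, so it can be written as $\tfrac{m}{n}$ with $m$ and $n$ coprime, and it lies in $I_jq_j\cap\Q^d$. By hypothesis $F_\eps$ has an $(m,n)$-completely periodic Lagrangian graph for infinitely many $\eps$ accumulating to $0$, so Lemma \ref{lemmino base} gives $\widehat{G}(\nu)=0$ for every $\nu\in\Z^d\setminus\{0\}$ with $\langle\nu,\lambda q_j\rangle\in\Z$. Letting $j$ and $\lambda$ range, we conclude that $\widehat{G}(\nu)=0$ for every $\nu\in\Z^d\setminus\{0\}$ that does \emph{not} belong to the set
\[
{\mathcal I}:=\bigl\{\nu\in\Z^d:\ \langle\nu,\lambda q_i\rangle\notin\Z\ \ \forall\,\lambda\in(a_i,b_i)\cap\Q,\ \forall\, i\in\{1,\ldots,d\}\bigr\}.
\]
Indeed, if $\nu\notin{\mathcal I}$ then there exist $i$ and $\lambda\in(a_i,b_i)\cap\Q$ with $\langle\nu,\lambda q_i\rangle\in\Z$, and the previous paragraph forces $\widehat{G}(\nu)=0$.

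Finally I would invoke Lemma \ref{lemmasetvectors}: since $q_1,\ldots,q_d\in\Q^d$ are linearly independent over $\R$ and $0<a_i<b_i$, the set ${\mathcal I}$ is finite. Hence $\widehat{G}(\nu)=0$ for all but finitely many $\nu\in\Z^d$, which is exactly the statement that $G$ is a trigonometric polynomial. I do not expect a serious obstacle here: the only point requiring mild care is the reduction from an arbitrary rational $\lambda q_j$ to a coprime pair $(m,n)$ and checking that such vectors are indeed covered by hypothesis \ref{weak integr} (they are, since $\lambda q_j\in I_jq_j$), and the observation that the exceptional set ${\mathcal I}$ is independent of which $\lambda$ realizes the integrality — both are immediate. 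The real content has already been done in Lemma \ref{lemmino base} (the Melnikov/subharmonic potential computation) and in the polytope argument of Lemma \ref{lemmasetvectors}; this proposition is essentially their juxtaposition.
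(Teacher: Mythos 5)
Your proof is correct and follows essentially the same route as the paper: apply Lemma \ref{lemmino base} to each rational rotation vector $\lambda q_i$ with $\lambda\in(a_i,b_i)\cap\Q$ to conclude $\widehat G(\nu)=0$ for every $\nu\notin\mathcal I$, then invoke the finiteness of $\mathcal I$ from Lemma \ref{lemmasetvectors}. The only detail the paper spells out that you take for granted is the harmless normalization that, after possibly replacing $q_j$ by $-q_j$ and shrinking $I_j$, one may assume $I_j=(a_j,b_j)$ with $0<a_j<b_j$.
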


We need an auxiliary result.

{
\begin{lemma}\label{lemmasetvectors}
Let $q_1,\ldots, q_d \in \Q^d$ be linearly independent vectors over $\R$ and  let $0<a_i<b_i$ for every $i=1, \ldots, d$.
Then, the set
$$
{\mathcal I}:= \{
\nu\in \Z^d \; \mbox{ s.t.}\; \; \langle \nu, \lambda q_i\rangle \not\in \Z  \quad \forall\; \lambda \in (a_i,b_i)\cap \Q \quad
 \forall\; i\in\{1,\ldots, d\} 
\}
$$
is finite.
\end{lemma}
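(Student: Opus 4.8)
The statement is purely arithmetic, so I would argue by contradiction and a compactness/counting argument. Suppose $\mathcal I$ is infinite, so there is a sequence $(\nu^{(k)})_{k\ge 1}$ of distinct integer vectors with $\|\nu^{(k)}\|\to\infty$, each satisfying: for every $i$ and every $\lambda\in(a_i,b_i)\cap\Q$, $\langle\nu^{(k)},\lambda q_i\rangle\notin\Z$. The key observation is that for a fixed $i$, the number $\langle\nu,q_i\rangle$ is rational (since $q_i\in\Q^d$ and $\nu\in\Z^d$), say $\langle\nu,q_i\rangle=p_i(\nu)/d_i(\nu)$ in lowest terms. If $p_i(\nu)=0$, then $\langle\nu,\lambda q_i\rangle=0\in\Z$ for all $\lambda$, contradicting membership in $\mathcal I$ — unless $(a_i,b_i)\cap\Q$ contributes nothing, which is impossible since it is a nonempty open rational interval. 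So $\langle\nu,q_i\rangle\ne 0$ for every $\nu\in\mathcal I$ and every $i$; since $q_1,\ldots,q_d$ form a basis of $\R^d$, this already forces $\nu\ne 0$, but more importantly I want to extract a contradiction for \emph{large} $\nu$.

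The heart of the matter: if $\langle\nu,q_i\rangle = r$ is a nonzero rational, then $\langle\nu,\lambda q_i\rangle=\lambda r\in\Z$ precisely when $\lambda\in\frac1r\Z$. For $\nu$ to lie in $\mathcal I$ we need $(a_i,b_i)\cap\Q$ to avoid $\frac1r\Z$, i.e.\ the arithmetic progression $\frac1r\Z$ must miss the open interval $(a_i,b_i)$ entirely. An arithmetic progression with common difference $1/|r|$ misses an open interval of length $b_i-a_i$ only if $1/|r| \ge b_i - a_i$, i.e.\ $|r| \le 1/(b_i-a_i)$. Hence $|\langle\nu,q_i\rangle| \le 1/(b_i-a_i)$ for every $\nu\in\mathcal I$ and every $i\in\{1,\ldots,d\}$. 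In other words, writing $L:\R^d\to\R^d$ for the linear isomorphism $L(\nu) := (\langle\nu,q_1\rangle,\ldots,\langle\nu,q_d\rangle)$ (invertible since the $q_i$ are linearly independent), we get $L(\mathcal I)\subseteq \prod_{i=1}^d [-\tfrac{1}{b_i-a_i},\tfrac{1}{b_i-a_i}]$, a bounded set. Therefore $\mathcal I\subseteq L^{-1}\big(\prod_i[-\tfrac1{b_i-a_i},\tfrac1{b_i-a_i}]\big)$ is bounded, and being a set of integer vectors it is finite.

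The step I expect to be the only genuinely delicate one is the claim that an arithmetic progression $\frac1{|r|}\Z$ avoiding an open interval $(a_i,b_i)$ of length $b_i-a_i$ forces $\frac1{|r|}\ge b_i-a_i$: this is elementary (any interval of length $\ge$ the common difference contains a point of the progression — indeed a progression with gap $g$ meets every closed interval of length $g$, hence every open interval of length $>g$, and one must be slightly careful that the progression passes through $0$ so it genuinely has infinitely many points on both sides, which is automatic). One should also double-check the degenerate possibility $r=0$ up front, as I did above, and note that $\mathcal I$ could in principle still be nonempty (e.g.\ $\nu=0$ is excluded, but small $\nu$ with $|\langle\nu,q_i\rangle|$ below all the thresholds may survive) — the lemma only claims finiteness, which is exactly what the boundedness argument delivers. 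Assembling these pieces gives the result.
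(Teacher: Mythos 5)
Your proof is correct and follows essentially the same strategy as the paper's: both reduce the lemma to a bound on the dual coordinates, $|\langle\nu,q_i\rangle|\le C_i$ for every $\nu\in\mathcal I$ (you obtain $C_i=1/(b_i-a_i)$ via the gap-versus-length argument for the progression $\tfrac{1}{\langle\nu,q_i\rangle}\Z$, while the paper gets a comparable bound by covering a neighbourhood of infinity with the intervals $(n/b_i,n/a_i)$), and then conclude boundedness, hence finiteness, because $q_1,\dots,q_d$ form a basis so $\nu\mapsto(\langle\nu,q_1\rangle,\dots,\langle\nu,q_d\rangle)$ maps $\mathcal I$ injectively into a bounded box. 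Your packaging is slightly more direct --- you argue on $\nu\in\Z^d$ and the arithmetic progression itself rather than through the complement set $\mathcal E\subset\Q^d$ and an orthogonal decomposition --- but the underlying argument is the same.
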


\medskip

{\begin{proof}[\bf Proof of Lemma \ref{lemmasetvectors}.]
For $i=1,\ldots, d$, we denote by $f_i$ the linear form $v\mapsto\langle v, q_i\rangle$. Since  $q_1,\ldots, q_d \in \Q^d$ form a basis, it follows that the map $f:=(f_1, \dots, f_d)$ is an isomorphism of $\R^d$, therefore we can define a norm on $\R^d$, given by
$\| \cdot \|_f:=\max \{ |f_1(\cdot)|, \ldots , |f_d(\cdot)|\}$.
Observe that for every $r>0$, $B_f(0, r):=\{ v\in \R^d:\, \| v\|_f\leq r\}$ is a compact subset of $\R^d$, and therefore $\Z^d\cap B_f(0, r)$ is finite.\\
Let  $\alpha:=\max\{ (b_i-a_i)^{-1}, 1\leq i\leq d\}$; we will  prove that for every $v\in \Z^d\backslash B_f(0, \alpha)$, there exist $i\in \{ 1, \dots, d\}$, $\lambda \in (a_i, b_i)\cap \Q$ such that $f_i(\lambda v)\in \Z$. This implies that 
${\mathcal I} \subset B_f(0,\alpha)\cap \Z^d$, hence proves the thesis.\\
 Let $v\in \Z^d\setminus B_f(0, \alpha)$; since $\| v\|_f>\alpha$, there exists $i\in \{ 1, \dots, d\}$ such that $|f_i(v)|>\alpha$. This implies that $f_i((a_i, b_i)v)$ is an interval of length greater than $\alpha(b_i-a_i)>1$; hence there exists $\lambda\in (a_i, b_i)$ such that $f_i(\lambda v)\in \Z$. We know that $f_i( v)=\langle v, q_i\rangle\in\Q$, since $v\in\Z^d$ and $q_i\in \Q^d$; therefore, $\lambda \in (a_i, b_i)\cap \Q$.

\end{proof}
}

\begin{rmk}
In a similar way, one can show that if we have a family of linearly independent $(q_i)_{1\leq i\leq m} \subset \Q^d$, with $m<d$, then the corresponding set ${\mathcal I}$ (defined as above) would be  the union of a finite number of translated copies of $E^\bot\cap \Z^d$, where 
$E$ is the linear subspace generated by $q_1, \dots, q_m$.  Hence, it is not finite.\\
\end{rmk}

\begin{proof}[{\bf Proof of Proposition \ref{piazzetta}.}]
Up to changing $q_j$ with $-q_j$ and, possibly, restricting to a sub-interval, we can assume that $I_j=(a_j,b_j)$ with $0<a_j<b_j$ for every $j=1,\ldots, d$.

Let ${\mathcal I}$ denote the  set from Lemma \ref{lemmasetvectors} with this choice of $q_1, \ldots, q_d \in \Q^d$  and $a_1, \ldots, a_d, b_1, \ldots, b_d$.

We claim that if $\nu\not \in {\mathcal I}$ then $\widehat{G}(\nu)=0$, where  $\widehat{G}(\nu)$ denotes the 
{$\nu$-th} Fourier coefficient of $G$. Being ${\mathcal I}$ a finite set (see  Lemma \ref{lemmasetvectors}), this allows us to conclude that $G$ is a trigonometric polyonomial:
$$
G(q) = \sum_{\nu \in \mathcal I} \widehat{G}(\nu) e^{2\pi i\, \langle \nu, q \rangle}.
$$

Let us prove the above claim. 
If $\nu \not \in {\mathcal I}$, then there exists $i\in \{1,\ldots, d \}$ and $\lambda \in (a_i,b_i)\cap \Q$ such that
$\langle \nu, \lambda q_i\rangle \in \Z$.  Hence we can apply
Lemma \ref{lemmino base} with $\frac{m}{n}= \lambda q_i $ and conclude that $\widehat{G}(\nu)=0$.
\end{proof}

\bigskip

We can now complete the proof of Theorem \ref{teo 2}.

\begin{proof}[{\bf Proof of Theorem \ref{teo 2}.}]
Since $G$ is  a trigonometric polynomial (see Proposition \ref{piazzetta}), then it admits a holomorphic  extension to $\C^d$.  {Moreover,  since $f$ is completely integrable and strongly positive, then it has also bounded rate (see Remark \ref{rmkciboundedrate})}. Therefore,  the claim follows from Theorem \ref{teo 1}, applied to any choice of $(m,n) \in \Z^d\times \N^*$ such that
$\frac{m}{n}\in \bigcup_{j=1}^d I_jq_j \cap \Q^d$.
\end{proof}

\bigskip

\appendix

\section{Lipschitz inequalities and Green bundles along invariant Lagrangian tori } \label{ALipschitzetGreen}
Here we provide some results for  Lagrangian tori that are invariant by  strongly positive symplectic twist maps, without any prescription on their dynamics.
Namely,  given a strongly positive symplectic twist map:  
\begin{itemize}
\item[$\bullet$]  we  provide a Lipschitz bound for all its $C^1$  Lagrangian invariant tori (Proposition \ref{lemmac1});
\item[$\bullet$] for every symplectic deformation of such a map by a potential, we prove that the Lipschitz bound can be chosen uniformly
for elements of the family corresponding to parameters  belonging to a compact set (Corollary \ref{Pnoinvariantgraph} (i)). Moreover,  we also prove that when the potential is not identically constant, for every parameter large enough, the corresponding element of the family has no invariant tori (Corollary \ref{Pnoinvariantgraph} (ii)). \\
 \end{itemize}

Let us start with this  Proposition.  \\

{\begin{prop}\label{lemmac1}
Let $f:\T^d\times\R^d\righttoleftarrow$ be a strongly positive symplectic twist map. Let  ${F}(q,p)=: (Q(q,p), P(q,p))$ {be a lift} of  $f$  and let $S(q,Q)$ denote a generating function.
\begin{itemize}
\item[{\bf (i)}] Let $\nu: \R^d\longrightarrow \R^d$ be $\Z^d$-periodic and $C^1$, and such that  $\cL := {\rm graph}(\nu)$ is Lagrangian and invariant by $F$.  Then we have:
\begin{equation}\label{Inegdiff}
\partial_q\partial_q S(q, Q(q, \nu(q))) + \partial_Q\partial_Q S(Q^{-1}(q,\nu(q)), q) >0 \qquad \forall q\in \R^d
\end{equation}
{\it i.e.}, it is positive definite as a matrix.\\
\item[{\bf (ii)}] Moreover, the following Lipschitz bound holds for $\cL$:
\begin{equation}\label{ELipschinequ}
\|D \nu(\cdot)\|_{\infty} \leq \max \Big\{ \big\|\partial_q\partial_q S (\cdot, Q(\cdot,\nu(\cdot)) \big\|_{\infty} , \big\|\partial_Q\partial_Q S  (\cdot, Q(\cdot,\nu(\cdot))\big\|_{\infty} \Big\}\,.
\end{equation}
\end{itemize}
\end{prop}
}

\begin{proof}
{{\sc Part} (i):}  We begin with proving positive definiteness, as in \eqref{Inegdiff}.\\

\noindent{\sc Step 1} ({\it Formula for $DF^{-1}$}):
Let us start with a preliminary computation.
Define
$$
\psi(q, p):= (q, Q(q,p)) 
\qquad {\rm and} \qquad \phi(q,Q):= (q, -\partial_q S(q,Q));
$$
it follows from the definition of generating function that $\psi$ and $\phi$ are one the inverse of the other. In particular:
\begin{eqnarray*}
D\psi (q,p) &=&
\left(\begin{matrix}
{\mathbb I}_d & {\mathbb O}_d\\
\partial_q Q(q,p) &\partial_p Q(q,p)
\end{matrix}
\right)
 = (D\phi(\psi(q,p)))^{-1} \\
&=&
\left(\begin{matrix}
{\mathbb I}_d & {\mathbb O}_d\\
- \partial_q\partial_q S & - {\partial_q\partial_Q}{S} 
\end{matrix}
\right)^{-1}_{|(q,Q)=(
\psi(q,p))}  \\
&=& \left(\begin{matrix}
{\mathbb I}_d & {\mathbb O}_d\\
- ({\partial_q\partial_Q}{S})^{-1} \partial_q\partial_q S & - ({\partial_q\partial_Q}{S})^{-1}
\end{matrix}
\right)_{|(q,Q)=(
\psi(q,p))}\,.
\end{eqnarray*}

It follows from this that
\begin{eqnarray*}
DF(q,p)&=& \left(\begin{matrix}
\partial_q Q(q,p)& \partial_p Q(q,p)\\
\partial_q P(q,p) &\partial_p P(q,p)
\end{matrix}
\right)\\
&=&
\left(\begin{matrix}
- ({\partial_q\partial_Q}{S})^{-1} \partial_q\partial_q S  &  - ({\partial_q\partial_Q}{S})^{-1} \\
{\partial_q\partial_Q}{S}- \partial_Q\partial_QS ({\partial_q\partial_Q}{S})^{-1}  \partial_q\partial_q S
& - \partial_Q\partial_QS ({\partial_q\partial_Q}{S})^{-1}
\end{matrix}
\right)_{|(q,Q)=(
\psi(q,p))}\!\!\!\!\!\!\!\!\!\!\!\!\!\!\!\!\!\!\!\!\!\!\!\!\!\!\!\!\!\!\!.
\end{eqnarray*}
Recall that $DF$ is a symplectic matrix, hence its inverse is given by the expression below (we denote by $^T$ the transposed matrix):
{\begin{eqnarray*}
D F^{-1}(Q,P) &=&  (DF(F(q,p)))^{-1} \\
&=& 
-\left(\begin{matrix}
{\mathbb O}_d& -{\mathbb I}_d \\
{\mathbb I}_d &  {\mathbb O}_d
\end{matrix}
\right) (DF)^T(F(q,p))\left(\begin{matrix}
{\mathbb O}_d& -{\mathbb I}_d \\
{\mathbb I}_d &  {\mathbb O}_d
\end{matrix}
\right) \\
&=&
\left(\begin{matrix}
- (\partial_Q\partial_QS ({\partial_q\partial_Q}{S})^{-1})^T
&  (({\partial_q\partial_Q}{S})^{-1})^T 
\\
-({\partial_q\partial_Q}{S} - \partial_Q\partial_QS ({\partial_q\partial_Q}{S})^{-1}  \partial_q\partial_q S)^T
&
- (({\partial_q\partial_Q}{S})^{-1} \partial_q\partial_q S)^T
\end{matrix}
\right)_{|(q,Q)=(
\psi(F(q,p)))}\!\!\!\!\!\!\!\!\!\!\!\!\!\!\!\!\!\!\!\!\!\!\!\!\!\!\!\!\!\!\!\!\!\!\!\!\!.
\end{eqnarray*}
}

\medskip

\noindent {\sc Step 2} ({\it Order relation of subspaces and proof of \eqref{Inegdiff}}): Let $\cL$ as in the hypotheses. {Let $x:=(q,p) \in \R^d\times \R^d$};
we recall that  there is an order relation on the set of Lagrangian subspaces of $T_x(\R^d\times\R^d)$ that are transverse to the fibers $V(x):=\ker D{\pi_1(x)}$.\\

In \cite[{Section 2.1}]{Arnaud:2013} {(see also \cite[Section 3.1]{Arnaud:2008})}, to every $L_-$, $L_+$ {Lagrangian subspaces} that are transverse to $V(x)$, is associated a quadratic form 
{defined on the quotient linear space}
$$Q(L_-, L_+):T_x(\R^d\times\R^d)/V(x)\rightarrow \R$$
in the following way:
 in the usual coordinates $(\delta q, \delta p)$ of $T_x(\R^d\times\R^d)$, $L_\pm$ is the graph of a symmetric matrix\footnote{The corresponding matrix is symmetric because $L_\pm$ is Lagrangian.} {$S_\pm$} and in the coordinates $\delta q$ the matrix associated to $Q(L_-, L_+)$ is given by $S_+-S_-$.
{This allows one to define an order relation on the set of these Lagrangian subspaces:}
$L_+>L_-$ if and only if  $Q(L_-, L_+)$ is positive definite.

In \cite[Proposition 7]{Arnaud:2013} (see also  \cite{BialyMcKay2004}), it is proven  that if $f$ is a strongly positive symplectic twist map, then at every $x\in \R^d\times\R^d$ whose (lifted) orbit is minimizing, the two Lagrangian subspaces
$$G_1(x):=DF(F^{-1}(x))V(F^{-1}(x)) \quad {\rm and} \quad G_{-1}(x):=DF^{-1}(F(x))V(F(x))$$ are transverse to the vertical $V(x)$ and satisfy
\begin{equation}\label{InegGreen}G_{-1}(x)<G_1(x).\end{equation}
As the orbit of every point that is contained in $\cL$ is minimizing, {see \cite[Theorem 35.2]{Gole}}, this inequality is true for all the points of $\cL$.
{Moreover, for $(q,p)\in \R^d\times \R^d$:
\begin{itemize}
\item the {direct image of the vertical}  $G_1(q,p):=DF(F^{-1}(q,p)))V(F^{-1}(q,p))$ turns out to be the graph of the matrix $\partial_Q\partial_Q S(\psi(F^{-1}(q,p)))$,
\item the {inverse image of the vertical} $G_{-1}(q,p):=DF^{-1}(F(q,p)))V(F(q,p))$ turns out to be the graph of the matrix $-\partial_q\partial_q S(\psi(F(q,p)))$.
\end{itemize}
Hence, the condition  $G_{-1}(q,p)<G_1(q,p)$ reads:
$$
\partial_q\partial_q S(\psi(F(q,p)))  + \partial_Q\partial_Q S(\psi(F^{-1}(q,p)))  >0,
$$}
thus proving \eqref{Inegdiff}.

\smallskip

\noindent {\sc Part {\rm (ii)}:}
We will now prove that $\partial_q\partial_q S(q,Q)$ and $\partial_Q\partial_Q S(q,Q)$ provide some Lipschitz inequalities for  $\cL$, thus proving \eqref{ELipschinequ}.\\ The idea is the following: we will construct a family $(\tilde f_t)_{t\in (0, 1]}$ of symplectic twist maps such that $\tilde f_1=f$ and such that for every $t\in (0, 1]$ the tangent space to $\tilde f_t(\cL)$ is transverse to $G_1$ and $G_{-1}$. Proving that there is an inequality among these three Lagrangian subspaces (or more exactly, among the symmetric matrices that define these subspaces) for $t$ small enough and that these three subspaces are transverse for every $t\in (0, 1]$, we will deduce the same inequality for $t=1$, {\it i.e.}, for $f$.\\
 Since $f$ is strongly positive, there exists a constant $\alpha>0$ such that
$${\partial_q\partial_Q}{S}(v, v)\leq -\alpha\| v\|^2 \qquad \forall q, Q, v \in\R^d.$$

\smallskip
\noindent {\sc Step 1} ({\it Interpolation of symplectic twist maps}):
We use  a 1-parameter family of symplectic twist maps that is built in the proof of \cite[Theorem 41.6]{Gole}. \\
We fix a smooth non-negative and non-increasing function $\eta:(0; +\infty)\to[0, +\infty)$ such that $\eta(1)=\eta'(1/2)=0$, $\eta(1/2)=1$ and $\lim_{t\to 0^+}\eta(t)=+\infty$. We also choose $\eta$ such that the extension of $1/\eta$ at $0$, which is continuous, is differentiable at $0$. Then we consider the 1-parameter family of generating functions $(\widetilde{S}_t)_{t\in (0,1]}$ that are defined by
$$
\widetilde{S}_t(q, Q):=\begin{cases}\frac{\alpha}{2}\, \eta(t)\,\| Q-q\|^2& \text{for}\quad 0<t\leq 1/2\\
\frac{\alpha}{2}\, \eta(t)\, \| Q-q\|^2 + (1-\eta (t))S(q, Q) & \text{for}\quad 1/2\leq t\leq 1.
\end{cases}
$$
For every $t\in (0, 1]$, the function $\widetilde{S}_t$ is the generating function of a symplectic twist map $\tilde{f}_t$ that is strongly positive.  
More precisely, we have $\tilde f_1=f$;  {let $(\widetilde F_t)_t$ denotes a continuous family of lifts of $\tilde f_t$ such that} for $0<t\leq 1/2$, 
$$\widetilde F_t(q, p)= (q+(a\eta(t))^{-1}p, p).$$
We associate  $\widetilde\psi_t$ to $\widetilde F_t$ as $\psi$ was associated to $F$ at the beginning of the proof.\\
We introduce for $0< t\leq 1$
\begin{itemize}
\item the direct image of the vertical  $\widetilde G^t_1(q,p):=D\widetilde F_t(\widetilde F_t^{-1}(q,p))V(\widetilde F_t^{-1}(q,p))$ turns out to be the graph of the matrix $\partial_Q\partial_Q \widetilde S_t(\widetilde\psi_t(\widetilde F_t^{-1}(q,p)))$,
\item the  inverse image of the vertical $\widetilde G_{-1}^t(q,p):=D\widetilde F_t^{-1}(\widetilde F_t(q,p))V(\widetilde F_t(q,p))$ turns out to be the graph of the matrix $-\partial_q\partial_q \widetilde S_t(\widetilde\psi_t(\widetilde F_t(q,p)))$.
\item when $(q, p)\in \widetilde F_t(\cL)$, $T_{(q, p)}\widetilde F_t(\cL)$ is the tangent space to $\widetilde F_t(\cL)$. For $t>0$ small enough, this is  the graph of a symmetric matrix that we denote by $\cT_t(q, p)$.
\end{itemize}
For $t\in (0, 1/2]$, we have 
$$\partial_Q\partial_Q \widetilde S_t(\psi_t(\widetilde F_t^{-1}(q,p)))=(\alpha \,\eta(t))^{-1}{\mathbb I}_d \quad {\rm and }\quad \partial_q\partial_q \widetilde S_t(\widetilde\psi_t(f_t^{-1}(q,p)))=-(\alpha \,\eta(t))^{-1}{\mathbb I}_d.$$ 
Because $\cT_t$ is bounded for $t>0$, sufficiently small, there exists $\eps>0$ such that
$$ -\partial_q\partial_q \widetilde S_t(\widetilde\psi_t(\widetilde F_t^{-1}(q,p)))<\cT_t(q, p)<\partial_Q\partial_Q \widetilde S_t(\widetilde\psi_t(\widetilde F_t^{-1}(q,p)))$$
for all $(q, p)\in \widetilde F_t(\cL)$ and  $t\in (0, \eps]$.\\

\noindent {\sc Step 2} ({\it Extension of the interpolation}): Observe that all these matrices continuously depend on $t$, when they are defined. Observe also that for $t\in (0, 1]$ and $(q, p)\in \widetilde F_t(\cL)$:
\begin{itemize}
    \item[a)] all the subspaces $\widetilde G^t_1(q,p)$ and $\widetilde G^t_{-1}(q,p)$ are transverse to the vertical;
    \item[b)] since $\widetilde G^t_1(q,p)=D\widetilde F_t( \widetilde F_t^{-1}(q,p))V(\widetilde F_t^{-1}(q,p))$ and $$T_{(q, p)} \widetilde F_t(\cL)=D \widetilde F_t(\widetilde F_t^{-1}(q,p))\big(T_{\widetilde F_t^{-1}(q,p))}\cL\big),$$ $\widetilde G^t_1(q,p)$ and $T_{(q, p)}\widetilde F_t(\cL)$ are the image by an isomorphism of two transverse subspaces, therefore they are also transverse;
    \item[c)] the same argument proves that $\widetilde G^t_{-1}(q, p)$ and $T_{(q, p)}\widetilde F_t(\cL)$ are also transverse.\\
\end{itemize}

Let us deduce that for all $t\in(0, 1]$, and all $(q, p)\in \widetilde F_t(\cL)$, $T_{(q, p)}\widetilde F_t(\cL)$ is the graph of a symmetric matrix $\cT_t(q, p)$ such that 
\begin{equation}\label{EncLip}-\partial_q\partial_q \widetilde S_t(\widetilde\psi_t(\widetilde F_t^{-1}(q,p)))<\cT_t(q, p)<\partial_Q\partial_Q \widetilde S_t(\widetilde\psi_t(\widetilde F_t^{-1}(q,p))).\end{equation}
We know that \eqref{EncLip} is true for $t\in (0, \varepsilon]$. Assume that this is not true for some  $t\in (0, 1]$. Let $t_0$ be the infimum of the $t\in (0,1]$ for which there exists $(q, p)\in \widetilde F_t(\cL)$ such that  \eqref{EncLip} does not hold. This means that either the inequality in \eqref{EncLip} does not hold or that $\cT_t(q, p)$ is not defined because $T_{(q, p)}\widetilde F_t(\cL)$ is not transverse to the vertical.\\
Since  the inequality is true for all $t\in (0, t_0)$, this implies,  by continuity in $t$, that for all $(q, p)$ in $\cL$, $T_{\widetilde F_{t_0}(q, p)}\widetilde F_{t_0}(\cL)$ is a graph that satisfies the same inequalities by replacing $<$ by $\leq$. \\ Then, there exists $(q_0, p_0)\in \widetilde F_{t_0}(\cL)$   such that 
$\partial_Q\partial_Q \widetilde S_{t_0}(\widetilde\psi_{t_0}(\widetilde F_{t_0}^{-1}(q,p)))-\cT_{t_0}(q, p)$ is positive semidefinite, but not definite, or that $\cT_{t_0}(q, p)-\partial_q\partial_q \widetilde S_{t_0}(\widetilde\psi_{t_0}(\widetilde F_{t_0}^{-1}(q,p)))$ is positive definite, but not definite. Since the kernel of a positive semidefinite matrix is equal to its isotropic cone, this implies  either that $\widetilde G_1^{t_0}(q, p)$ and $T_{(q, p)}\widetilde F_{t_0}(\cL)$ are not transverse or that $T_{(q, p)}\widetilde F_{t_0}(\cL)$ and $\widetilde G_{-1}^{t_0}(q, p)$ are not transverse, thus providing a contradiction to what observed in items b) and c) above. \\

\noindent {\sc Step 3} ({\it Conclusion}): Now, using  inequality \eqref{EncLip} for $t=1$, we deduce
\begin{equation}
G_{-1}(q, p)<T_{(q, p)}\cL <G_1(q,p) \qquad \forall (q, p)\in\cL,
\end{equation}
which provides Lipschitz inequalities
$$ \|D\nu\|_\infty\leq \max\Big\{ \big\|\partial_q\partial_q S(\psi)\big\|_{\infty, \cL} , \big\|\partial_Q\partial_Q S(\psi)\big\|_{\infty, \cL} \Big\},
$$
where $\|\cdot \|_{\infty, \cL}$ denotes the sup-norm on $\cL$.\\
\end{proof}

Let us now prove the following result, that plays a crucial role in the proof of Theorem \ref{teo 1}, part II.

\begin{cor}\label{Pnoinvariantgraph}
Let $f:\T^d\times\R^d\righttoleftarrow$ be a strongly positive  symplectic twist map { with bounded rate}, let $G:\T^d\rightarrow \R$ be a $C^2$ function and
let   $(f_\eps)_{\eps\in \R}$ be a symplectic deformation of $f$ by $G$ (see Notations \ref{notadeformation}). Then:
\begin{itemize}
    \item[{\rm (i)}] For every $\eps_0\geq0$, there exists $K(\eps_0)\geq 0$ such that every $C^1$-Lagrangian graph invariant by some $f_\eps$, with $|\eps|\leq \eps_0 $, is Lipschitz with Lipschitz constant  $K(\eps_0)$.
    \item[{\rm (ii)}] If $G$ is not constant, there exists $\Lambda>0$ such that for all $\eps\in\R$ such that $|\eps|\geq \Lambda$,  $f_\eps$ does not admit any $C^1$ Lagrangian invariant graph\footnote{This gives another argument for the result of MacKay and Bialy \cite{BialyMcKay2004}, that we cited in {Remark \ref{BialyMcKaySuris}}.}.
\end{itemize}
\end{cor}

\medskip

\begin{rmk}
In dimension 2 and for the standard map, the second point is due to Mather \cite{Mather:1984}  and Aubry-Le Daeron \cite {AubryLD1983}. In any dimension, but only for  generating  functions {$S(q, Q)=h(Q-q)$, with $h$ positive definite quadratic form}, Herman proved the result in \cite{Herman:2018}.
\end{rmk}

\medskip

\begin{proof}
We consider the symplectic deformation of $f$ by a potential $G\in C^2(\T^d)$, given by $f_\eps:  \T^d \times \R^d \righttoleftarrow$, with $\eps\in \R$, whose generating functions are
$
S_\eps (q,Q) := S(q,Q) + \eps G(q).
$
We denote by $F_\eps$ a continuous family {of lifts of $f_\eps$}.\\

\noindent {\sc Part} (i):
Let $\nu:\T^d \longrightarrow \R^d$ be $C^1$ and such that $\cL := {\rm graph}(\nu)$ is Lagrangian and 
invariant by some $f_\eps$, with $|\eps|\leq \eps_0 $. 
Then, considering its lift to $\R^d\times \R^d$, we deduce from \eqref{ELipschinequ} that 
\begin{eqnarray*}
\|D\nu\|_\infty\leq \max\Big\{ \big\|\partial_q\partial_q S\big\|_{\infty} , \big\|\partial_Q\partial_Q S \big\|_{\infty} \Big\}+\eps_0\| G\|_\infty =: K(\eps_0),
\end{eqnarray*}
which is finite since $f$ has bounded rate.\\

\noindent {\sc Part} (ii):
We now assume that $G$ is not constant. Then, there exist $q_1, q_2\in \T^d$ and $v_1, v_2\in \R^d$ such that $D^2G(q_1)(v_1, v_1)>0$ and $D^2G(q_2)(v_2, v_2)<0$. Since $f$ has bounded rate, there exists {$\Lambda>0$} such that
for every $\eps>\Lambda$ and $Q, Q' \in   \R^d$   
$$ 
(\partial_q\partial_qS(q_2, Q) + \partial_Q\partial_QS(Q', q_2) +\eps D^2G(q_2))(v_2, v_2)<0$$
and 
$$(\partial_q\partial_qS(q_1, Q)+\partial_Q\partial_QS(Q', q_1)- \eps D^2G(q_1))(v_1, v_1)<0.$$
Therefore, by \eqref{Inegdiff}, we deduce that $f_\eps$ cannot have an invariant Lagrangian graph when $|\eps|>\Lambda$.

\end{proof}

\color{black}

\end{document}